\documentclass[a4paper,10pt]{amsart}
 \usepackage{amsfonts,mathrsfs}
 \usepackage{amsthm}
 \usepackage{amssymb}
 \usepackage{arydshln}
 \usepackage{tikz-cd}
 \usepackage{tikz}
 \usepackage{cleveref}
 \usepackage{url}
 \usepackage{fourier}
 \usepackage{comment}
 
\theoremstyle{definition}
\newtheorem{defin}{Definition}[section]
\newtheorem{ex}[defin]{Example}
\newtheorem{rem}[defin]{Remark}

\theoremstyle{plain}
\newtheorem{lem}[defin]{Lemma}
\newtheorem{prop}[defin]{Proposition}
\newtheorem{thm}[defin]{Theorem}

\newtheorem*{thm*}{Theorem}
\newtheorem{cor}[defin]{Corollary}
\newtheorem*{cor*}{Corollary}

\newtheorem*{quest*}{Question}
\newtheorem{conj}[defin]{Conjecture}
\newtheorem*{conj*}{Conjecture}

\usepackage{mathrsfs}
\newcommand{\sing}{\operatorname{sing}}

\newcommand{\rk}{\operatorname{rk}}

\newcommand{\cC}{{\mathcal C}}

\newcommand{\cF}{{\mathcal F}}

\newcommand{\cL}{{\mathcal L}}

\newcommand{\cO}{{\mathcal O}}
\newcommand{\cP}{{\mathcal P}}

\newcommand{\cV}{{\mathcal V}}

\newcommand{\C}{{\mathbb C}}
\newcommand{\R}{{\mathbb R}}
\newcommand{\pp}{\mathbb{P}}

\title[The Geometry of Polycons and Wachspress' Conjecture]{The Geometry of Polycons and a  Counterexample to Wachspress' Conjecture}

\author{Clemens Brüser}
\address{Technische Universit\"at Dresden, Germany} 
\email{c.brueser.acad@posteo.de}

\thanks{The author has been partly supported by the DFG grant 502861109.}
\begin{document}

\subjclass[2010]{Primary: 14H10, 14N30, 14P10, 14P25}

\begin{abstract}
 Polycons, initially introduced by Wachspress in 1975 as a tool in finite element methods, are generalizations of polygons in that they allow conic boundary components. We are interested in the adjoint curve of a given polycon, i.e. the unique curve of minimal degree vanishing in the so-called residual arrangement. It was conjectured by Wachspress that under some regularity assumptions this curve does not vanish in the interior of its defining polycon. However, until recently the only class of polycons for which this was proven were convex polygons. We present a polycon bounded by three conics that constitutes a counterexample to Wachspress' conjecture.
 
 The origin of this counterexample reveals some beautiful geometry of polycons. Replacing one degree two boundary component of a polycon with a line produces a new polycon. We show that the adjoint of the latter is a contact curve to the adjoint of the former. This naturally leads to the consideration of symmetric linear determinantal representations of adjoints, which lets us explicitly describe the fibers of the adjoint map in the case of polycons bounded by three conics. As a corollary we prove that generically the adjoint of a polycon bounded by three conics is smooth.
\end{abstract}
\maketitle

\noindent \textbf{Acknowledgements.} I extend my gratitude to Mario Kummer, Eugene Wachspress, and the authors of \cite{KohnEtAl2024AdjCurves} for bringing this topic to my attention and for valuable discussions and suggestions that occurred throughout the preparation of this article.

\noindent \textbf{Declaration on the Use of AI Tools.} I declare that during preparation of this manuscript AI search assistants were used for the purpose of writing \texttt{python} code for the visualization of results. The output was reviewed and edited as needed. The full responsibility for the content of this article lies with me.

\section{Introduction}

\emph{Polycons} are a generalization of polygons originally introduced by \cite{Wachspress1975Adjoints} (reprinted in \cite[Part I]{Wachspress2016Adjoints}) as a tool in finite element methods. Contrary to polygons, a polycon allows for boundary components that are segments of conics.

For a polycon the set of singular points of its (algebraic) boundary that are not vertices is called the \emph{residual arrangement}, and under some regularity conditions it has been shown that there exists a unique curve of minimal and predetermined degree passing through the residual arrangement, called the \emph{adjoint} curve.

One of the main questions in the study of adjoints is whether there exist points in the interior of a real polycon, in which the adjoint vanishes. This problem is commonly referred to as \emph{Wachspress' Conjecture} and was open even in one of its simplest instances: for polycons bounded by three conics.

Attention has also recently been drawn to the study of \emph{adjoint maps}, which assign to a (potentially complex) polycon with boundary components of fixed degree its adjoint (\cite{AgostiniPlaumannSinnWesner2024AdjPolypol} and \cite[\S4]{KohnEtAl2024AdjCurves}). While these papers focus on finite adjoint maps, we are interested in continuous parametrizations of polycons giving rise to the same adjoint.

\subsection{Outline of the Article}

This article comprises several topics. Our main result is a counterexample to Wachspress' Conjecture (\Cref{ex: counterexample}, \Cref{fig:Wachspress_CounterExample}, and \Cref{thm: WachspressWrong}). We state this example as early as possible, just after recalling the basic notions of polycons and their adjoints (\Cref{sec:PrelimPolycon}).

During the process of writing this paper, a link between polycons and the theory of symmetric linear determinantal representations was found. After introducing some necessary background in \Cref{sec:PrelimAG} we present two results related to this connection in \Cref{sec:mainResult}. The first is \Cref{thm: adj_recursion}: given a polycon $\cP$ with adjoint $A$, then by replacing a conic boundary component of $\cP$ with a line, one obtains a new polycon $\cP'$ with adjoint $A'$, such that $A'$ is a contact curve to $A$ with predetermined contact divisor.

The second result is \Cref{prop: fiber222}: given a fixed cubic curve, we determine a parametrization of all polycons bounded by three conics with the same adjoint curve. These families of polycons correspond to families of symmetric linear determinantal representations of the cubic. Phrased dfferently, we describe the fiber of the adjoint map for polycons bounded by three conics. This will allow us to conclude in passing that the adjoint of a polycon bounded by three conics is generically smooth.

\section{Polycons, Adjoints, and Wachspress' Conjecture} \label{sec:PrelimPolycon}

\subsection{Polycons and their Adjoints}

We begin by introducing the notion of a polycon. We take an approach that differs slightly from recent work by \cite{KohnEtAl2024AdjCurves} in that we restrict considerations to boundary components of degree at most two. At the same time this allows us to take a less restrictive approach to the regularity of the boundary components. For reference, the content of this section relies mainly on \cite{Wachspress2016Adjoints} and \cite{KohnEtAl2024AdjCurves}.

As a standing assumption we assume that in any family of objects $O_{12}, O_{23}, \dots, O_{n1}$ the labels are cyclically repeating, i.e. we set $O_{n,n+1} = O_{n1}$. We make an analogous convention for objects labeled just by $1, \dots, n$, i.e. $O_{n+1} = O_1$.

\begin{defin}[Polycon]
 In the following assume $n \geq 2$. A \emph{polycon} $\cP$ consists of
 \begin{enumerate}
     \item a tuple $(C_i)_{i=1}^n$ of reduced curves of degree one or two, called \emph{boundary components},
     \item a tuple $(v_{i,i+1})_{i=1}^n$ of points in $\pp^2$, called \emph{explicit vertices}, such that $C_i$ and $C_{i+1}$ intersect transversally in $v_{i,i+1}$ and no other boundary component contains $v_{i,i+1}$.
 \end{enumerate}
 A polycon is called \emph{real}, if all input data is real, and additionally
 \begin{enumerate}
     \item there is a choice of segments $(\sigma(C_i))_{i=1}^n$ on the boundary components with end points $v_{i-1,i}$ and $v_{i,i+1}$, called \emph{sides},
     \item there exists a closed semi-algebraic set $\cP_{\geq0}$, the interior of which is a union of simply connected sets, and whose boundary is the union of the sides of $\cP$.
 \end{enumerate}
\end{defin}

If a boundary component $C_i$ is a singular conic, then we call its unique singular point an \emph{implicit vertex} of $\cP$. We denote the set of all vertices of $\cP$ by $V(\cP)$. The curve $C_\cP := \bigcup_i C_i$ is called the \emph{algebraic boundary} of $\cP$ and we call $\deg(\cP) := \deg(C_\cP)$ the \emph{degree} of $\cP$. If working with real polycons, then the union $\partial \cP := \bigcup_i \sigma(C_i)$ of all sides of $\cP$ is called the \emph{Euclidean} boundary of $\cP$. Finally, we call the open semi-algebraic set $\cP_{>0} := \cP_{\geq0} \setminus \partial\cP$ the \emph{interior} of $\cP$.

\begin{figure}[!ht]
    \centering
    \includegraphics[width=0.4\textwidth]{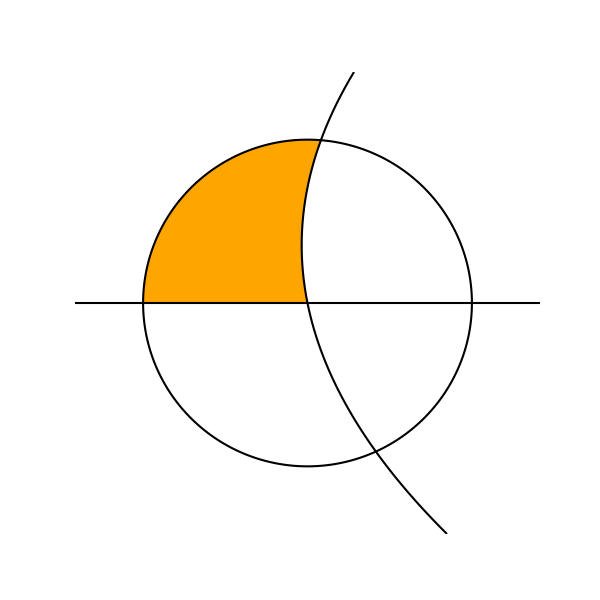}
    \includegraphics[width=0.4\textwidth]{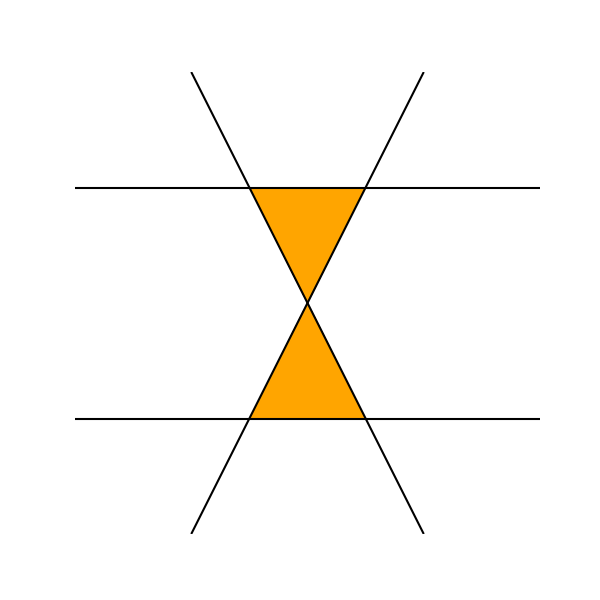}
    \caption{Examples of a regular (left) and an irregular (right) real polycon (shaded in orange). Both polycons are nodal.}
    \label{fig:polycon_exmpl}
\end{figure}

The constellation of vertices and boundary components of a polycon can display pathological behaviour. The absence of these peculiarities leads to the notions of nodal and regular polycons (see \Cref{fig:polycon_exmpl}).

\begin{defin}[Nodal and regular polycons]
 A polycon $\cP$ is called \emph{nodal}, if all singularities of its algebraic boundary $C_\cP$ are nodes. A real polycon is called \emph{regular}, if the only singular points of $C_\cP$ on $\partial \cP$ are the vertices of $\cP$ and $C_\cP$ does not intersect $\cP_{>0}$.
\end{defin}

\begin{defin}[Residual Arrangement]
 Let $\cP$ be a polycon. The set $R(\cP) := \sing(C_\cP) \setminus V(\cP)$ is called the \emph{residual arrangement} of $\cP$.
\end{defin}

The adjoint curve of $\cP$ aims to be the (unique) curve of minimal degree vanishing in $R(\cP)$. If $\cP$ is nodal, then this may be taken as its definition (see \Cref{fig:adjoint_exmpl} for an illustration). However, we do not want to impose this restriction. In order to still obtain a unique object, one has to impose additional vanishing conditions on the adjoint curve in the residual arrangement.

For this, one considers the desingularization of the algebraic boundary $C_\cP$ in the residual arrangement. This yields a morphism $\nu \colon Z \to C_\cP$, where $Z$ is the strict transform of $C_\cP$ under a sequence of blow-ups of $\pp^2$. By the ideal sheaf $\cC_\nu$ on $C_\cP$ we denote the conductor associated with the morphism $\nu$.

\begin{defin}[Adjoint] \label{def: adjoint}
 An \emph{adjoint polynomial} $\alpha_\cP$ of a degree $d$ polycon $\cP$ is a global section of the sheaf $\cO_{\pp^2}(d-3) \otimes \iota_* \cC_\nu$ where $\iota \colon C_\cP \to \pp^2$ is the inclusion morphism. The vanishing locus $A_\cP$ of $\alpha_\cP$ is called an \emph{adjoint curve} of $\cP$.
\end{defin}

\begin{rem} \label{rem: AdjUnique_pathologies}
 The conditions imposed on an adjoint curve $A_\cP$ of $\cP$ by the conductor $\cC_\nu$ can be translated into explicit vanishing conditions, which must be satisfied by $A_\cP$ in the residual arrangement $R(\cP)$ (\cite[\S 5.1]{Wachspress2016Adjoints}). These can also be motivated by considering limits of nodal polycons. We give the two most common examples:
 \begin{enumerate}
     \item two boundary components tangent in a residual point impose that the adjoint's tangent direction in this point is the same as well.

     \item if at least three boundary components meet in a residual point, then the adjoint vanishes with multiplicity two in this point.
 \end{enumerate}
 One can furthermore relax the condition that each vertex of $\cP$ is only contained in only two boundary components, which intersect transversally. Again, two common examples are given below (see also \cite[Example 5.4]{Wachspress2016Adjoints}):
 \begin{enumerate}
     \item if $C_i$ and $C_{i+1}$ are tangent in $v_{i,i+1}$, then the adjoint vanishes in $v_{i,i+1}$.

     \item if there exists $j \neq i,i+1$ with $v_{i,i+1} \in C_j$, and if any two of the curves containing $v_{i,i+1}$ intersect transversally in this vertex, then the adjoint vanishes in $v_{i,i+1}$ with tangent direction equal to that of $C_j$.
 \end{enumerate}
\end{rem}

\begin{prop}[{\cite[Proposition 2.2]{KohnEtAl2024AdjCurves}}]\label{prop:uniqueAdj}
 For every polycon $\cP$ there exists a unique adjoint curve $A_\cP$. Furthermore $A_\cP$ and the algebraic boundary $C_\cP$ have no irreducible components in common.
\end{prop}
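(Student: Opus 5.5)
Write $C := C_\cP$, of degree $d$, and let $\nu\colon Z\to C$ be the partial normalization from \Cref{def: adjoint}, which resolves exactly the singularities in $R(\cP)$. The vertices stay as singular points of $Z$. Since each $v_{i,i+1}$ is a transversal intersection of $C_i$ and $C_{i+1}$ only, these are ordinary nodes; an implicit vertex is the node of a line pair $C_i$. The plan is to compute
\[
h^0\bigl(\pp^2, \cO(d-3)\otimes\iota_*\cC_\nu\bigr)
\]
using the conductor exact sequence and adjunction, show it equals $1$, and then rule out common components.

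\textbf{Step 1.} Since $\cC_\nu \supseteq \cI_C$, there is an exact sequence
\[
0\to\cO_{\pp^2}(-3)\to \cO_{\pp^2}(d-3)\otimes\iota_*\cC_\nu \to \cC_\nu(d-3)\to 0 .
\]
Because $H^0(\cO(-3))=H^1(\cO(-3))=0$, we get $H^0$ of the middle term $\cong H^0(C,\cC_\nu(d-3))$. By adjunction, $\omega_C=\cO_C(d-3)$. For the partial normalization $\nu$ one has $\nu_*\omega_Z = \mathcal{H}om(\nu_*\cO_Z,\omega_C) = \cC_\nu\,\omega_C$ (Gorenstein duality for finite birational maps). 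Hence $\cC_\nu(d-3)\cong\nu_*\omega_Z$, and
\[
h^0 = h^0(Z,\omega_Z) = p_a(Z) = h^1(\cO_Z).
\]

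\textbf{Step 2: $p_a(Z)=1$.} The curve $Z$ is the union of the normalizations of the individual components, all rational, glued only at the vertices. An irreducible conic or line normalizes to $\pp^1$. A line pair $C_i$ keeps its implicit vertex, so it is two $\pp^1$'s meeting in one node. The dual graph of $Z$ is therefore a single cycle through $C_1,\dots,C_n$, each line pair contributing a chain of two vertices. A connected nodal curve with rational components has $p_a = $ first Betti number of the dual graph, which is $1$. So $h^0(Z,\omega_Z)=1$ and the adjoint exists and is unique. Here $\omega_Z$ is trivial: $Z$ is a cycle of rational curves. This step requires care with non-reduced or tangential residual singularities, but the conductor formalism handles them uniformly. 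I expect the duality identification $\nu_*\omega_Z\cong\cC_\nu\omega_C$ to be the main technical point; I would cite it as standard (Serre, \emph{Groupes algébriques et corps de classes}).

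\textbf{Step 3: no common components.} Suppose $C_j$ divides $\alpha_\cP$. Pull back the unique section to $Z$; it corresponds to a generator of $H^0(\omega_Z)\cong H^0(\cO_Z)$. A nowhere-vanishing section restricts to a nonzero section on every component of the cycle. Concretely, the section vanishing identically on $C_j$ would vanish on the components of $Z$ over $C_j$. But a nonzero section of the trivial sheaf on a connected curve vanishes nowhere, which is a contradiction.

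The only subtlety is the translation between $\alpha_\cP$ vanishing on $C_j$ and its image in $H^0(\nu_*\omega_Z)$ vanishing on the components over $C_j$. This holds because the isomorphism in Step 1 is a restriction map followed by a sheaf isomorphism, compatible with restriction to components away from the finitely many singular points.
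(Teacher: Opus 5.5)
\textbf{Comparison with the source.} The paper does not prove this proposition; it imports it from \cite[Proposition~2.2]{KohnEtAl2024AdjCurves}. Your argument is the standard one behind that result. The conductor sequence together with $\nu_*\omega_Z\cong\cC_\nu\otimes\omega_C$ identifies adjoint polynomials with $H^0(Z,\omega_Z)$. The partial normalization $Z$ is a cycle of smooth rational curves, so $\omega_Z\cong\cO_Z$, which gives both uniqueness and non-vanishing on every component. For boundary components that are lines or smooth conics, with pairwise distinct explicit vertices (automatic for $n\ge 3$, an extra assumption for $n=2$), your proof is complete.

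\textbf{The gap: line pairs.} This is exactly where you go beyond the cited setting. In Step 2 you assert that an implicit vertex is a node of $C_\cP$, and that each line pair contributes a chain of two vertices to a single cycle. The paper's definition of a polycon guarantees neither, and without them the proposition itself fails. So these must be imposed as hypotheses; they cannot be derived.

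\emph{Both explicit vertices on one line.} Suppose $C_i=L\cup L'$ and both explicit vertices on $C_i$ lie on $L$. Then $L'$ is a pendant component of $Z$, attached at a single node. One still has $p_a(Z)=1$, but $\omega_Z|_{L'}\cong\cO_{\pp^1}(-1)$, so the unique section of $\omega_Z$ vanishes identically on $L'$. Concretely, take a smooth conic $C_1$ and $C_2=L\cup L'$ with $v_{12},v_{21}\in L\cap C_1$. The residual points are the two points of $L'\cap C_1$, and the adjoint line is $L'$ itself.

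\emph{Another component through the implicit vertex.} Then that point is a non-nodal singularity of $C_\cP$. Being a vertex, it is not resolved by $\nu$, and $p_a(Z)$ jumps. For example, let $C_1$ be a line pair with singular point $p$, and let $C_2,C_3,C_4$ be general lines with $p\in C_3$. Then $d=5$, but there are only three residual points, so the adjoint conics form a net and are not unique.

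\textbf{The fix.} Add the hypotheses that the two explicit vertices on a line pair lie on different lines, and that its singular point lies on no other boundary component. The paper's definition needs these implicitly as well. With them, your Step 2 is correct as written. In Step 3 you should argue per irreducible component of $C_\cP$, i.e.\ per component of $Z$, rather than per $C_j$, since a line pair is reducible.
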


\begin{figure}[!ht]
    \centering
    \includegraphics[width=0.5\textwidth, trim = {0 3.5cm 0 3.5cm}, clip]{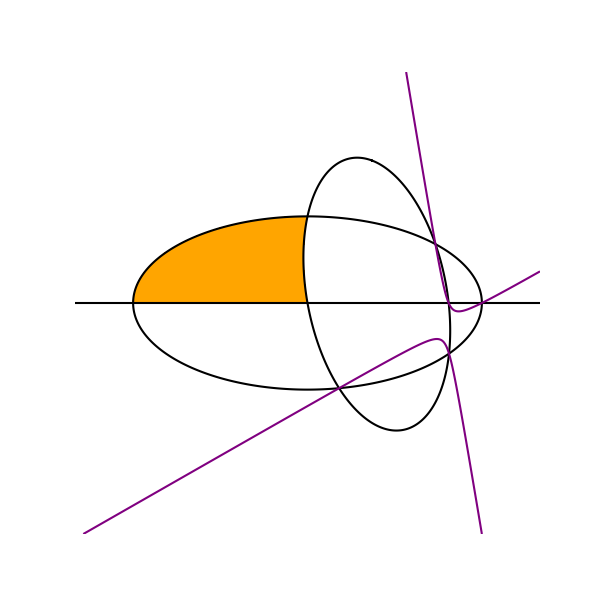}
    \caption{A regular degree five polycon (orange) and its conic adjoint (purple).}
    \label{fig:adjoint_exmpl}
\end{figure}

\begin{rem}
 An algorithm (the GADJ algorithm) for the robust computation of the adjoint curve has been described in \cite{DasguptaWachspress2008AdjointAlg}.
\end{rem}

We proceed to outline the fields of interest for this paper.

\subsection{Wachspress' Conjecture}

Since the inception of the study of adjoints an open problem had been Wachspress' Conjecture, which is a statement on the real topology of the adjoint curve relative to the interior of its defining real polycon.

\begin{conj}[{Wachspress' Conjecture, \cite[p.~396]{Wachspress1980AdjConjecture}, \cite[Conjecture 3.4]{KohnEtAl2024AdjCurves}}] \label{conj: Wachspress}
 If $\cP$ is a regular real polycon with adjoint curve $A$, then $A(\R)$ does not intersect $\cP_{\geq0}$.
\end{conj}

Since its formulation, progress on this conjecture had been sparse. It had been answered in the positive for convex polygons in \cite[p.~96, Example 5.1]{Wachspress2016Adjoints}, and a complete topological description of the adjoint has in this case been given in \cite[\S3.2]{KohnEtAl2024AdjCurves}. Furthermore, it is known that the adjoint does not intersect the boundary $\partial\cP$ of $\cP$. The following lemma is a special case of \cite[Lemma 3.5]{KohnEtAl2024AdjCurves}.

\begin{lem}[{\cite[Lemma 3.5]{KohnEtAl2024AdjCurves}}]\label{lem:AdjOffBoundary}
 If $\cP$ is a nodal polycon, then $A_\cP$ intersects $C_\cP$ exactly in $R(\cP)$, and the intersection multiplicity in each residual point equals 1. In particular $A$ is smooth in $R(\cP)$, and if $\cP$ is regular, then $A$ does not intersect $\partial\cP$.
\end{lem}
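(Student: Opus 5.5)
The plan is a degree count via Bézout, combined with the fact that each node of $C_\cP$ imposes a vanishing condition on $A_\cP$. Write $d = \deg(\cP)$, so that $\deg A_\cP = d-3$. By \Cref{prop:uniqueAdj}, $A_\cP$ and $C_\cP$ share no component, so the intersection product is finite and
\[
\sum_{p} (A_\cP \cdot C_\cP)_p = d(d-3).
\]
For a nodal $C_\cP$ the conductor at a node $p$ is the maximal ideal, so $A_\cP$ passes through every $p \in R(\cP)$. The lower bound on local multiplicities is where a careful argument is needed. At a node $p$ lying on two distinct branches (two components $C_i, C_j$, or the two lines of a singular conic), we have $(A\cdot C_\cP)_p = (A\cdot B_1)_p + (A\cdot B_2)_p \ge 2$. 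Equality forces $A$ to be smooth at $p$ and transverse to both branches. So each residual point contributes at least $2$ to the intersection of $A$ with the total curve. In the normalization language used in the statement, this contribution is $1$ on each branch, i.e.\ intersection multiplicity $1$ with each $C_i$ through $p$.

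Next I will count the residual points. Vertices are nodes of $C_\cP$ by assumption. Write $n_1$ for the number of lines and $n_2$ for the number of conics, of which $s$ are singular. The total number of nodes of a nodal curve that is a union of lines and conics is
\[
\delta = \binom{d}{2} - \sum_i g(\tilde C_i) - (\text{number of components}) + 1 ,
\]
but it is easier to count directly. We have $\delta = \sum_{i<j} \deg C_i \deg C_j + s$, where the singular conics are counted as two lines. The number of explicit vertices is $n$, the implicit vertices number $s$, and $|R(\cP)| = \delta - n - s$. Summing the lower bound over $R(\cP)$ gives
\[
\sum_{p\in R} (A\cdot C_\cP)_p \ge 2\Bigl(\sum_{i<j}\deg C_i\deg C_j - n\Bigr).
\]
Using $d^2 = \sum_i \deg C_i^2 + 2\sum_{i<j}\deg C_i\deg C_j$, the right side equals $d^2 - \sum_i \deg C_i^2 - 2n$. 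I will then check that this equals $d(d-3)$. That requires $\sum_i \deg C_i^2 + 2n = 3d = \sum_i 3\deg C_i$, i.e.\ $\sum_i (\deg C_i^2 - 3\deg C_i + 2) = 0$. This holds because each term $(e-1)(e-2)$ vanishes for $e\in\{1,2\}$.

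This is the crux: the Bézout total is exhausted exactly by the residual points with minimal multiplicities. Three consequences follow.
\begin{enumerate}
\item There is no intersection at vertices or anywhere else, so $A\cap C_\cP = R(\cP)$.
\item Every local multiplicity is exactly $1$ per branch. Any singular point of $A$ at $p$ would give at least $2$ per branch, so $A$ is smooth at each residual point.
\end{enumerate}

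Finally, for a regular real polycon, the only singular points of $C_\cP$ on $\partial\cP$ are vertices. Hence $R(\cP)\cap\partial\cP=\emptyset$, and since $\partial\cP\subset C_\cP$ and $A\cap C_\cP = R(\cP)$, we get $A\cap\partial\cP=\emptyset$.

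The step I expect to need care is the node count for singular conics, where the implicit vertex must be excluded from $R(\cP)$ while the two lines still meet the other components. Writing each singular conic as two lines and treating its vertex like an explicit one resolves this. The algebra then goes through as above, since a singular conic contributes $2\cdot 2$ rather than $1+1$ to $\sum \deg^2$, balanced by its one extra node.
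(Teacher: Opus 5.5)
Your Bézout count is correct. The paper does not prove this lemma itself; it cites \cite[Lemma 3.5]{KohnEtAl2024AdjCurves}. Your argument compares the total $A\cdot C_\cP = d(d-3)$ with the lower bound $2|R(\cP)|$, where $|R(\cP)| = \sum_{i<j}\deg C_i\deg C_j - n = \tfrac12 d(d-3)$; this is the same count $\tfrac12(d-1)(d-2)-1$ the paper uses in \Cref{sec:contactResults}, and it is the standard proof of that statement.

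There are two harmless slips. First, the genus formula you write down and then discard is wrong: for a nodal curve with $k$ components of geometric genera $g_i$ one has $\delta = \binom{d-1}{2} - \sum_i g_i + k - 1$. Second, counting $n$ distinct explicit vertices tacitly uses $v_{i,i+1}\neq v_{j,j+1}$. For $n\geq 3$ this follows from $v_{i,i+1}\notin C_j$ for $j\notin\{i,i+1\}$, and for $n=2$ it is implicit in the definition.
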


Already, if $\cP$ is bounded by three conics, then the answer to \Cref{conj: Wachspress} was open and only partial results could be established (\cite{WachspressJr2020AdjConjecture}, \cite[\S3.3]{KohnEtAl2024AdjCurves}). The authors of \cite{KohnEtAl2024AdjCurves} enumerate all (up to equivalence) possible arrangements of three ellipses bounding a real polycon of degree six. Then they prove on a case-by-case basis that in most cases Wachspress' conjecture holds. The cases, for which they do not present a proof, essentially boil down to the unpredictability whether the adjoint curve has an oval, and where this oval might lie relative to the interior of its defining polycon.

\begin{ex}[Counterexample to \Cref{conj: Wachspress}] \label{ex: counterexample}
 Consider the real polycon with vertices
 \begin{align*}
     v_{12} = (9,-6), && v_{23} = (0,0), && v_{31} = (-3,-6)
 \end{align*}
 and boundary components being the zero sets of the polynomials
 \begin{equation*}
 \begin{split}
     c_1(x,y) &= 20x^2+27y^2-120x+108y-864, \\
     c_2(x,y) &= 80x^2+102xy+57y^2-400x-96y, \\
     c_3(x,y) &= 32x^2-26xy+9y^2-96x+72y.
 \end{split}
 \end{equation*}
 Here, we work in a fixed affine chart. The adjoint of this polycon is the zero set of the polynomial
 \begin{equation*}
 \begin{split}
     \alpha(x,y) &=
     3440x^3-8400x^2y-762xy^2+1971y^3\\
     &+20720x^2+51168xy-1620y^2-193248x-96336y+342144.
 \end{split}
 \end{equation*}
 In \Cref{fig:Wachspress_CounterExample} one can observe that the adjoint curve has an oval, which lies in the interior of the polycon. In \Cref{thm: WachspressWrong} below we formally prove that this constitutes a counterexample to \Cref{conj: Wachspress}. Motivation for this example is given in \Cref{sec:SolutionGenesis}.
\end{ex}

\begin{figure}[!ht]
    \centering
    \includegraphics[width=1\textwidth]{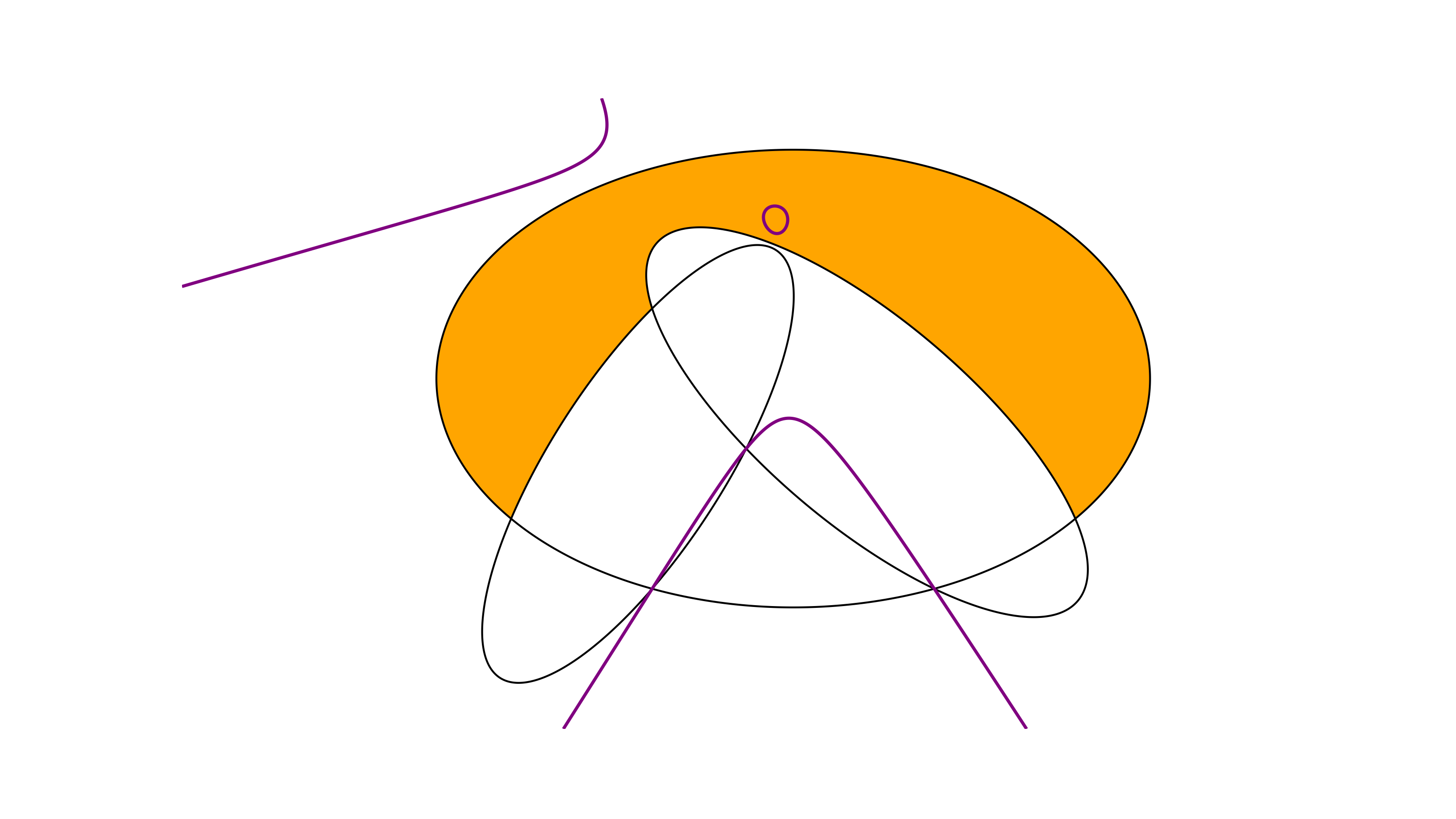}
    \caption{Counterexample to Wachspress' Conjecture.}
    \label{fig:Wachspress_CounterExample}
\end{figure}

\begin{thm} \label{thm: WachspressWrong}
 Wachspress' Conjecture is false.
\end{thm}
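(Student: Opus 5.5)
The plan is to verify that the polycon $\cP$ of \Cref{ex: counterexample} satisfies the hypotheses of \Cref{conj: Wachspress} while its adjoint has a real point in $\cP_{>0}$. All steps are explicit computations with the given polynomials.

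\textbf{Step 1 (the polycon is well defined).} Check that $c_1(9,-6)=c_2(9,-6)=0$, $c_2(0,0)=c_3(0,0)=0$ and $c_3(-3,-6)=c_1(-3,-6)=0$. Check that no vertex lies on the third conic, and that the gradients at each vertex are linearly independent, so the intersections are transversal. Each $c_i$ is a smooth real ellipse: its quadratic part is positive definite and its $3\times 3$ symmetric matrix has nonzero determinant. Then fix the sides $\sigma(C_i)$ as the arcs joining consecutive vertices that bound the region drawn in \Cref{fig:Wachspress_CounterExample}, and take $\cP_{\geq0}$ to be the closed region they enclose, described by sign conditions on $c_1,c_2,c_3$.

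\textbf{Step 2 (regularity and nodality).} Compute $\sing(C_\cP)$. It consists of the pairwise intersections $C_i\cap C_j$, which should be $12$ distinct points if $\cP$ is nodal, since there are no triple points and no tangencies. Remove the three vertices to obtain $R(\cP)$, which has $9$ points. Use resultants of the pairs $c_i,c_j$ to show these points are simple intersections and that no point lies on all three conics. For regularity, check that every real residual point lies off the three sides. Also check that no arc of some $C_k$ enters $\cP_{>0}$; this amounts to showing $C_k$ meets the other two sides only at the vertices, which follows from the real intersection points just computed. By \Cref{prop:uniqueAdj} the adjoint is the unique cubic through the $9$ nodes. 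Confirm this by checking that $\alpha$ vanishes at all points of $R(\cP)$, either by exact arithmetic or by reducing $\alpha$ modulo the ideals $(c_i,c_j)$ and showing it lies in the ideal of the residual points.

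\textbf{Step 3 (an oval inside).} It suffices to exhibit one real zero of $\alpha$ in $\cP_{>0}$. Pick two rational points $p,q$ in $\cP_{>0}$, certified by the strict sign conditions from Step 1, with $\alpha(p)$ and $\alpha(q)$ of opposite signs. Because $\cP_{>0}$ is connected, a path from $p$ to $q$ inside it must cross $A(\R)$. A convenient choice is a horizontal segment through the interior, checked to lie inside by the signs of $c_i$ along it; then $\alpha$ restricted to that segment is a cubic in one variable, and a sign change can be certified exactly. This already contradicts \Cref{conj: Wachspress}.

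\textbf{Main obstacle.} I expect Step 2 to be the delicate part, specifically certifying rigorously that $\cP$ is regular. This requires locating all real intersection points of the conics relative to the chosen arcs. I would handle it with exact real-root isolation, for example Sturm sequences, on the degree-four resultants of each pair $c_i,c_j$.
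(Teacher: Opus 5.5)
Your plan is essentially the paper's proof. It certifies nodality from the pairwise intersections of the three ellipses, and it gets regularity by locating the real intersection points on each ellipse relative to the sides and identifying $\cP_{\geq0}$ with a component of $\{c_1\le 0,\ c_2\ge 0,\ c_3\ge 0\}$. As in the paper, this also needs a local check at the vertices that no branch of $C_\cP$ continues into the interior, since meeting the other sides only at the vertices does not by itself rule this out. It then finishes with an intermediate-value sign change of $\alpha$ along a segment in $\cP_{\geq0}$.

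Your insistence on certifying both endpoints inside $\cP_{>0}$ is well placed. With the printed data, the paper's $Q=(2/5,2/5)$ has $c_2(Q)<0$, and both $\alpha(Q)$ and $\alpha(0,4)$ are positive, so it does not certify the sign change. Your horizontal segment does: on $y=5/2$, $0\le x\le 5/2$, one has $c_1<0$, $c_2>0$, $c_3>0$ throughout and $\alpha(0,5/2)>0>\alpha(5/2,5/2)$, which gives the required zero in $\cP_{>0}$.
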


\begin{proof}
 Take the data from \Cref{ex: counterexample}. We need to formally prove that the described polycon $\cP$ is regular and that its adjoint vanishes in a point in $\cP_{>0}$. To facilitate computations the aid of the computer algebra system \texttt{Macaulay2} (\cite{M2}) was employed. The full code can be found in \Cref{app:Code}. We write $C_i = \cV(c_i)$. Whenever $i$ and $j$ appear as indices in the same argument we assume $i \neq j$. Similarly we assume $\{i,j,k\} = \{1,2,3\}$ whenever all three labels occur in the same argument. For notational simplicity we do not care for the order of indices, i.e. $v_{ij} = v_{ji}$.
 
 We begin by proving that every pair of two conics $C_i$ and $C_j$ intersect in exactly two distinct real points and a pair of complex conjugate points, such that in particular the polycon $\cP$ is nodal. To see this, first note that besides in the vertices of $\cP$ the boundary conics intersect in the following residual points with rational coordinates:
 \begin{equation*}
 \begin{split}
     R_{12} = (6,-8) &\in C_1 \cap C_2; \\
     R_{23} = (2,-4) &\in C_2 \cap C_3; \\
     R_{31} = (0,-8) &\in C_3 \cap C_1.
 \end{split}
 \end{equation*}
 The remaining two intersection points are given by
 \begin{equation*}
 \begin{split}
     \cV(170x-357y+1560, 289y^2-2295yz+4820) &\subset C_1 \cap C_2, \\
     \cV(196x+369y-1188, 18287y^2-67780y+63360) &\subset C_2 \cap C_3, \\
     \cV(x+18y-90, 241y^2-2316yz+5568) &\subset C_3 \cap C_1
 \end{split}
 \end{equation*}
 for each pair of boundary conics. In each case, the second condition implies that the $y$-coordinates of the intersection points are complex-valued. This proves that $\cP$ is a nodal polycon.
 
 We establish regularity by parametrizing the Euclidean boundary of $\cP$. Each $C_i$ is an ellipse in our chosen affine chart and we have the following parametrizations:
 \begin{align*}
     \pi_1 \colon C_1 &\to \pp^1 \\
     (x,y) &\mapsto \left\lbrace \begin{array}{ll}
         (y+6 : -(x+3)) & \textup{if } (x,y) \neq v_{31} \\
         (10 : 9) & \textup{else};
     \end{array} \right. \\
     \pi_2 \colon C_2 &\to \pp^1 \\
     (x,y) &\mapsto \left\lbrace \begin{array}{ll}
         (y : -x) & \textup{if } (x,y) \neq v_{23} \\
         (25 : 7) & \textup{else};
     \end{array} \right. \\
     \pi_3 \colon C_3 &\to \pp^1 \\
     (x,y) &\mapsto \left\lbrace \begin{array}{ll}
         (y : -x) & \textup{if } (x,y) \neq v_{23} \\
         (4 : -3) & \textup{else}.
     \end{array} \right.
 \end{align*}

We furthermore have the following:
\begin{align*}
    \pi_1(R_{31}) &= (2 : 3), & \pi_1(R_{12}) &= (2 : 9), & \pi_1(v_{12}) &= (0 : 1), \\
    \pi_2(R_{12}) &= (4 : 3), & \pi_2(R_{23}) &= (2 : 1), & \pi_2(v_{23}) &= (2 : 3), \\
    \pi_3(R_{23}) &= (2 : 1), & \pi_3(R_{31}) &= (1 : 0), & \pi_3(v_{31}) &= (2 : -1).
\end{align*}
In particular we observe that for $i=1,2,3$ there exists a unique segment on $C_i$ with end points $v_{ij}$ and $v_{ik}$ that does not contain any real residual points. These are
\begin{equation*}
\begin{split}
    \sigma(C_1) &= \left\lbrace \pi_1^{-1}(s : 9) \mid s \in (0,10) \right\rbrace^c, \\
    \sigma(C_2) &= \left\lbrace \pi_2^{-1}(50 : t) \mid t \in (14,75) \right\rbrace^c, \\
    \sigma(C_3) &= \left\lbrace \pi_3^{-1}(4 : t) \mid t \in (-2,-3) \right\rbrace.
\end{split}
\end{equation*}
With the so-defined sides $\sigma(C_i)$ we obtain a polycon $\cP$ without residual points on its Euclidean boundary $\partial \cP$. To determine its interior we consider the semi-algebraic set
 \begin{equation*}
     S = \{ c_1 \leq 0, c_2 \geq 0, c_3 \geq 0 \}.
 \end{equation*}
 We claim that $\partial\cP \subset S$. Since there are no residual points on $\partial \cP$, for this it suffices to verify that for each $i$ there exists a point $P \in \sigma(C_i)$ that is not a vertex of $\cP$ such that $P \in S$. Our points of choice are $(0,4) \in \sigma(C_1)$, $\left(\frac{1}{5},2\right) \in \sigma(C_2)$ and $\left(-\frac{10}{7},-\frac{16}{7}\right) \in \sigma(C_3)$. Consequently all vertices of $\cP$ lie in the same connected component $S_\cP$ of $S$. Furthermore, if $C_\cP$ vanished somewhere in the interior of $S_\cP$, then there would be $i$ and $j$ such that in a neighborhood of $v_{ij}$ on $C_i(\R)$, each point would be contained in $S_\cP$. This, however, does not hold. In particular $\cP$ is a regular polycon with $\cP_{\geq0} = S_\cP$.
 
 We conclude the proof by evaluating the adjoint polynomial $\alpha$ in the two points $P = (0,4)$ and $Q = \left(\frac{2}{5}, \frac{2}{5}\right)$. They both lie in $\cP_{\geq0}$: as noted before we have $P \in \sigma(C_1)$; furthermore, by restricting each $c_i$ to $\cV(9x+y-4)$, one obtains that the line segment connecting $P$ and $Q$ is contained in $\cP_{\geq0}$. Since we have $\alpha(P) \alpha(Q) < 0$, by the intermediate value theorem we conclude that $\alpha$ has a zero within $\cP_{\geq0}$.
\end{proof}

While we have proven that \Cref{conj: Wachspress} is false in general, it is reasonable to conjecture that it might still hold for certain ``nice'' classes of polycons. One possible restriction --- especially in light of \Cref{thm: adj_recursion} below --- is to assume that, after replacing any boundary conic with a line, the resulting polycon $\cP'$ is still regular. Note that the polycon depicted in \Cref{fig:Wachspress_CounterExample} obviously does not have this property.

\begin{conj} \label{conj: refinement}
 Suppose $\cP$ is a regular real polycon, satisfying the condition that whenever one of its boundary conics (or two adjacent boundary lines) are replaced with a line, then the resulting polycon is again regular. Then Wachspress' Conjecture holds for $\cP$.
\end{conj}

\begin{rem}
 A tentative proof of \Cref{conj: refinement} has been communicated to the author via \cite{Wachspress2019ColdCase}. At its core it suggests that using \cite[Theorem 5.2]{Wachspress2016Adjoints} one can recursively restrict considerations to polycons $\cP$, for which $\cP_{\geq0}$ is convex (see also the discussion in \cite[\S11.4]{Wachspress2016Adjoints}). For this remaining case, a direct proof is suggested. However, the presented arguments should rather be taken as a road map towards a proof, leaving many steps to be clarified or smoothed out.
\end{rem}

\subsection{Adjoint Maps}

Underlying the genesis of \Cref{ex: counterexample} is a good understanding of the set of polycons giving rise to the same adjoint. We are thus looking at the fibers of the so-called \emph{adjoint map}, which we define below. Note that due to our restricted setting (having boundary components of degree at most two), our definition is slightly more general than that given in \cite[\S4]{KohnEtAl2024AdjCurves}. For $1 \leq d \leq 2$ we denote by $\cC_d$ the variety of plane curves of degree $d$. Now for a tuple of numbers $(d_1, \dots, d_n)$ with $1 \leq d_i \leq 2$, let
\begin{equation*}
    Y_{d_1, \dots, d_n}^\circ \subset \cC_{d_1} \times \dots \times \cC_{d_n} \times (\pp^2)^n
\end{equation*}
denote the set consisting of all tuples $(C_1, \dots, C_n, v_{12}, \dots, v_{n1})$ such that
\begin{enumerate}
    \item $C_i$ and $C_j$ intersect transversally for $i \neq j$,
    \item $v_{i,i+1} \in C_i \cap C_{i+1}$,
    \item $v_{i,i+1} \not \in C_j$ for $j \not\in \{i,i+1\}$.
\end{enumerate}
The first condition implies that $C = \bigcup_i C_i$ is reduced and that both $C_i$ and $C_{i+1}$ are smooth in $v_{i,i+1}$. The Zariski closure of $Y_{d_1, \dots, d_n}^\circ$ in $\cC_{d_1} \times \dots \times \cC_{d_n} \times (\pp^2)^n$ is denoted by $Y_{d_1, \dots, d_n}$. Writing $d = d_1 + \dots + d_n$, the adjoint map $\alpha_{d_1, \dots, d_n}$ is the rational map
\begin{equation*}
    \alpha_{d_1, \dots, d_n} \colon Y_{d_1, \dots, d_n} \dashrightarrow \pp(k[x_0,x_1,x_2]_{d-3}),
\end{equation*}
which maps a polycon with boundary components $(C_i)_{i=1}^n$ satisfying $\deg(C_i) = d_i$ and with designated explicit vertices $(v_{i,i+1})_{i=1}^n$ to its unique adjoint.

One question regarding adjoint maps concerns whether they are finite and --- if so --- what their degree is. In \cite{KohnEtAl2024AdjCurves} the authors determined all cases in which the adjoint map is dominant and generically finite (but note the differences to our definition of the adjoint map). We are, however, interested in the adjoint map $\alpha_{2,2,2}$, which assigns to a polycon bounded by three conics its cubic adjoint and has infinite fibers. The proof of the following result has been communicated by Mario Kummer. A second proof is given in \Cref{cor:Yirred}.

\begin{lem}
 The variety $Y_{2,2,2}$ is irreducible.
\end{lem}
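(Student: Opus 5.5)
Since $Y_{2,2,2}$ is by definition the Zariski closure of $Y^\circ_{2,2,2}$, it suffices to show that $Y^\circ_{2,2,2}$ is irreducible. The plan is to write $Y^\circ_{2,2,2}$ as an open subset of an irreducible incidence variety, which we obtain by projecting to the vertices and examining the fibers.

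Consider the incidence variety
\begin{equation*}
 I = \{(C_1,C_2,C_3,v_{12},v_{23},v_{31}) \mid v_{12} \in C_1\cap C_2,\ v_{23}\in C_2\cap C_3,\ v_{31} \in C_3 \cap C_1\} \subset \cC_2^3 \times (\pp^2)^3 ,
\end{equation*}
together with the projection $p \colon I \to (\pp^2)^3$. Fix a triple of points $(v_{12},v_{23},v_{31})$. The fiber of $p$ over it is a product of linear subspaces of $\cC_2 \cong \pp^5$:
\begin{enumerate}
 \item $C_1$ ranges over the conics through $v_{12}$ and $v_{31}$;
 \item $C_2$ ranges over the conics through $v_{12}$ and $v_{23}$;
 \item $C_3$ ranges over the conics through $v_{23}$ and $v_{31}$.
\end{enumerate}
When the three points are pairwise distinct, each of these is a $\pp^3$, so the fiber is $(\pp^3)^3$.

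Let $U \subset (\pp^2)^3$ be the open set of triples of pairwise distinct points. Then $p^{-1}(U) \to U$ is a projective bundle. More precisely, it is the product of three projectivized vector bundles: the kernels of the evaluation maps $k[x_0,x_1,x_2]_2 \to k^2$ at the two relevant points. These evaluation maps have constant rank $2$ on $U$. Hence $p^{-1}(U)$ is irreducible of dimension $6+9 = 15$, because $U$ is irreducible and the fibers are irreducible of constant dimension.

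It remains to see that $Y^\circ_{2,2,2}$ is a nonempty open subset of $p^{-1}(U)$.
\begin{enumerate}
 \item The vertices are pairwise distinct on $Y^\circ_{2,2,2}$. For instance, $v_{12}=v_{23}$ would lie on $C_3$, contradicting condition (3).
 \item The conditions defining $Y^\circ_{2,2,2}$ are open: transversality of pairwise intersections (including reducedness of each $C_i$ and of the union) and $v_{i,i+1}\notin C_j$ for $j \notin \{i,i+1\}$.
 \item Nonemptiness is witnessed by any explicit example, such as \Cref{ex: counterexample}, or by three general conics with chosen intersection points.
\end{enumerate}
Thus $Y^\circ_{2,2,2}$ is a nonempty open, hence dense and irreducible, subset of $p^{-1}(U)$. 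Its closure $Y_{2,2,2}$ is therefore irreducible; it equals the closure of $p^{-1}(U)$.

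The main point to check carefully is the openness claim. The condition ``$C_i$ and $C_j$ meet transversally'' means that all four intersection points are distinct, with each curve smooth at each of them. This is the complement of a discriminant-type condition, so it is Zariski open in $\cC_2\times\cC_2$. The condition $v_{i,i+1}\notin C_j$ is evidently open. Once openness holds, the rest is the standard fact that a scheme fibered over an irreducible base, with irreducible equidimensional fibers given by a projective bundle, is irreducible.
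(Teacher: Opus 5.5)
Your proof is correct, but it passes from one fiber to the whole space differently from the paper.

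The paper fixes a single triple of vertices. It notes that the conic triples through them form an irreducible product of linear spaces $X$, on which the transversality and non-incidence conditions cut out a nonempty open set. It then moves the vertices with $\operatorname{GL}_3(k)$, which gives a dominant rational map from (essentially) $X\times\operatorname{GL}_3(k)$ onto $Y^\circ_{2,2,2}$.

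You instead let the vertices vary and exhibit $p^{-1}(U)$ as a fiber product of three $\pp^3$-bundles over the irreducible space $U$ of pairwise distinct triples. Then $Y^\circ_{2,2,2}$ is a nonempty open subset of an irreducible variety.

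What your route buys:
\begin{enumerate}
 \item It treats collinear and non-collinear vertex triples uniformly. By contrast, $\operatorname{GL}_3(k)$ is transitive only on non-collinear triples, not on all pairwise distinct ones. The paper's dominance step therefore tacitly needs that configurations with collinear vertices do not form a separate component. This is true, for instance by your bundle picture or by a dimension count ($14<15$), but the paper does not say so.
 \item It gives $\dim Y_{2,2,2}=15$ for free.
\end{enumerate}

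What the paper's route buys: it is shorter. It only needs irreducibility of one fiber together with homogeneity under the group action, and never has to set up the relative evaluation maps or the projective-bundle structure.
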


\begin{proof}
 Consider three pairwise distinct points $v_{31},v_{12},v_{23}$ in $\pp^2$ and the variety $X \subset \pp(k[x_0,x_1,x_2]_2)^3$ comprising all triples of conics $(C_1,C_2,C_3)$ with $v_{ij} \in C_i \cap C_j$ for all $i,j$.
 
 The condition that $C_i$ and $C_j$ intersect transversally for all $i,j$ and that $v_{ij} \not \in C_k$ for $k \neq i,j$ defines a non-empty open subvariety of $X$. Since $\operatorname{GL}_3(k)$ acts 3-transitively on $\pp^2$ we obtain a dominant rational map
 \begin{equation*}
     \pp^2 \times \pp^2 \times \pp^2 \times \operatorname{GL}_3(k) \dashrightarrow Y_{2,2,2}^\circ,
 \end{equation*}
 and since the source of this rational map is irreducible, so must be its target.
\end{proof}

In \Cref{sec:adjMap_fiber} we succeed in giving an explicit description of the fibers of $\alpha_{2,2,2}$ using the theory of determinantal representations.

\subsection{A Note on Positive Geometries} \label{sec:posGeom}

An emerging field of research is the study of so-called \emph{positive geometries} (\cite{arkani2017PosGeom}, \cite{Lam2022PosGeom}). It has been observed by \cite[\S2]{KohnEtAl2024AdjCurves} that there are important connections between real polycons (and their adjoints) and positive geometries in $\pp^2$ (and their canonical forms). While this relation is fundamental, it is not necessary for the present article. Thus we refrain from any discussion of positive geometries.

\section{Preliminaries on Determinantal Representations} \label{sec:PrelimAG}

The subsequent results rely on some basic notions of algebraic geometry, namely divisor theory and its relation to determinantal representations. We assume familiarity with the theory of divisors on curves as developed in \cite[\S II.6]{Hartshorne1977AG}. At times we also require familiarity with the notation and the basic statements of cohomology of coherent sheaves on curves (e.g. the Riemann--Roch Theorem and Serre duality). By a curve we always mean a projective plane curve.

Given a smooth curve $C$ and any other curve $C'$ not containing $C$, we denote by $C'.C$ the natural divisor on $C$ induced by the intersection $C'\cap C$ (counted with multiplicities). By $H$ we denote the hyperplane divisor, i.e. the divisor on $C$ cut out by a projective line. Given a Cartier divisor $D$ on $C$, by $\cO_C(D)$ we denote its associated line bundle.

The main goal of this section is to recall the relation between determinantal representations of a curve $C$ and certain divisors/sheaves on $C$. This relation has been discussed extensively throughout the last century. Modern accounts for both smooth and singular curves (and hypersurfaces) are given by \cite{Vinnikov1989LDR}, \cite{Beauville2000DetHypSurf}, \cite{KernerVinnikov2012singularDetRep}, and \cite[\S4]{Dolgachev2012CAG}. Unless stated otherwise $C = \cV(f)$ is a smooth projective plane curve of degree $d$ over the field $k \in \{\R, \C\}$.

\begin{defin}[Contact curve]
 A curve $C'$ is called a \emph{contact curve} to $C$, if $C'.C = 2D$ for some divisor $D$ on $C$. We call $D$ the \emph{contact divisor} of $C'$ on $C$.
\end{defin}

\begin{defin}[Determinantal representation]
 A \emph{linear determinantal representation} (LDR) of $C$ is a matrix $M \in M_d(k[x_0,x_1,x_2]_1)$ such that $\det(M) = \gamma f$ for some $\gamma \in k^\times$. We say that $M$ is a \emph{symmetric} LDR, if $M = M^\top$. We say that two symmetric LDRs $M, M'$ of $C$ are \emph{equivalent}, if there exists $T \in \operatorname{GL}_d(k)$ such that
 \begin{equation*}
     T M T^\top = \pm M'.
 \end{equation*}
\end{defin}

\begin{thm}[{\cite[Theorem 5]{Vinnikov1989LDR}}] \label{thm:contDiv=LDR}
 For a smooth curve $C$ there is a bijection between equivalence classes of symmetric LDRs of $C$ and divisor classes $[D]$ on $C$ satisfying $2D \sim (d-1)H$ and $\ell(D-H) = 0$.
\end{thm}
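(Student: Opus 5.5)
The plan is to construct mutually inverse maps between the two sets, using the adjugate matrix in one direction and a resolution of a line bundle in the other.

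\textbf{From a representation to a divisor class.} Let $M$ be a symmetric LDR of $C$, so $\det M = \gamma f$. Since $C$ is smooth, $M$ has corank exactly one at every point of $C$; if it had corank at least two somewhere, all partial derivatives of $\det M$ would vanish there and $C$ would be singular. Hence the adjugate $\operatorname{adj}(M)$, whose entries are forms of degree $d-1$, restricts on $C$ to a matrix of rank one. By symmetry it has the form $v v^\top$, where $v$ spans $\ker M$.

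Take a fixed row, say the first, of $\operatorname{adj}(M)$. Its entries cut out on $C$ a divisor of the form $D + E_j$, where $D$ is the common base part. The diagonal entry $\operatorname{adj}(M)_{11} = v_1^2$ then cuts out $2D$ exactly. Therefore the principal minor defines a contact curve of degree $d-1$, and $2D \sim (d-1)H$.

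Equivalently, $\operatorname{coker}(M)$ is a line bundle $\cL$ on $C$. Symmetry of $M$ gives $\cL \cong \mathcal{H}om(\cL,\cO_C(d-1))$, i.e.\ $\cL^{2}\cong \cO_C(d-1)$; I would set $\cO_C(D)=\cL$.

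For the vanishing condition, the resolution
\[
0\to\cO_{\pp^2}(-1)^d\xrightarrow{M}\cO_{\pp^2}^d\to\iota_*\cL\to0
\]
gives, after twisting by $\cO(-1)$, that $H^0(\cL(-1))=0$, i.e.\ $\ell(D-H)=0$. I would check that equivalent representations $TMT^\top$ give isomorphic cokernels, so this defines a map from equivalence classes to divisor classes.

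\textbf{From a divisor class to a representation.} Conversely, let $\cL=\cO_C(D)$ with $2D\sim(d-1)H$ and $\ell(D-H)=0$. By Serre duality, using $K_C=(d-3)H$,
\[
h^1(\cL)=h^0\bigl((d-3)H-D\bigr)=h^0(D-2H)\le h^0(D-H)=0.
\]
Hence $\deg D = d(d-1)/2$ and Riemann--Roch give $h^0(\cL)=d(d-1)/2-g+1=d$. The vanishing $H^1(\cL(k))=0$ for $k\ge -1$, which follows the same way, makes $\iota_*\cL$ an arithmetically Cohen--Macaulay sheaf on $\pp^2$. So it has a length-one resolution by sums of line bundles (Beauville's criterion). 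With $h^0(\cL(-1))=0$ and $h^0(\cL)=d$, this resolution is $\cO(-1)^d\to\cO^d$, i.e.\ given by a linear matrix $M$ with determinant $\gamma f$.

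The isomorphism $\cL\cong\mathcal{H}om(\cL,\cO_C(d-1))$ lets one compare the resolution with its dual twisted by $\cO(d-1)$. Uniqueness of minimal resolutions then yields $T$ with $M^\top = TMS$. The standard argument, averaging with the symmetric or skew part as in Beauville's Theorem B, produces a symmetric representative.

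\textbf{Main obstacle.} The step I expect to be hardest is showing that these two constructions are mutually inverse up to the stated equivalence, including the sign $\pm$. The point is that automorphisms of $\cL$ are only scalars, and compatible isomorphisms $\cL\to\mathcal{H}om(\cL,\cO_C(d-1))$ are unique up to a scalar. Scaling by $\lambda$ then corresponds to $TMT^\top=\lambda M$, which over $k=\R$ forces the $\pm$ in the equivalence.
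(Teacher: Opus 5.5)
The paper does not prove this statement. It quotes it from Vinnikov, and the surrounding text describes the correspondence classically: the forward direction via the contact curves $u^\top M^{\textup{adj}}u$, and the backward direction via Dixon's algorithm (\Cref{prop: Dixon}). That algorithm extends a given contact curve $a_{11}$ to a basis of $\cL((d-1)H-D)$ and fills in the remaining adjugate entries using Max Noether's theorem.

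Your argument is correct and takes the other standard route: through $\cL=\operatorname{coker}M$, uniqueness of minimal resolutions, and Beauville's symmetrization. This is the route the paper itself uses for the singular analogue \Cref{prop:singDixon}. Its proof is exactly your Serre-duality computation, verifying $h^0(\cO_C(D-H))=h^1(\cO_C(D-H))=0$ and $\cO_C(D)\cong\cO_C(-D)\otimes\cO_C(d-1)$.

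What your route buys:
\begin{enumerate}
\item Well-definedness on classes and injectivity come almost for free, since $\operatorname{End}(\cL)=k$ and $k^\times/(k^\times)^2$ accounts for the $\pm$ over $\R$.
\item Independence of the representative $D$ is built in, because only $\cO_C(D)$ enters.
\item The argument carries over verbatim to singular curves with $D$ in the smooth locus, which is what \Cref{sec:mainResult} needs.
\end{enumerate}
Dixon's algorithm is instead constructive. It starts from a prescribed contact curve as the $(1,1)$-entry and directly produces the divisor pattern of \Cref{rem:DixonVanishing}, which the paper uses to read polycons off $M^{\textup{adj}}$. Its cost is that nondegeneracy of the output (where $\ell(D-H)=0$ is used) and independence of choices must be argued separately.

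Three points to tighten.
\begin{enumerate}
\item Arithmetic Cohen--Macaulayness of $\iota_*\cL$ is automatic for a line bundle on a plane curve, since there is no intermediate cohomology to kill. What forces the resolution to be $\cO(-1)^d\to\cO^d$ is $H^0(\cL(-1))=H^1(\cL(-1))=0$, i.e.\ $0$-regularity. So the sentence deriving linearity from $h^0(\cL(-1))=0$ and $h^0(\cL)=d$ should also invoke the $H^1$-vanishing you already proved.
\item Beauville's Theorem B needs a \emph{symmetric} isomorphism $\cL\to\mathcal{H}om(\cL,\cO_C(d-1))$. Say explicitly that this is automatic, because a pairing $\cL\otimes\cL\to\cO_C(d-1)$ on a line bundle is symmetric. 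That is why only the symmetric case occurs and no ``skew part'' is involved.
\item The pointwise identity $\operatorname{adj}(M)(P)=vv^\top$ controls supports only. To get $\operatorname{adj}(M)_{11}.C=2D$ with multiplicities, and to match this $D$ with $\cO_C(D)\cong\operatorname{coker}M$, identify $M^{\textup{adj}}|_C$ with the multiplication map $H^0(\cL)\otimes H^0(\cL)\to H^0(\cO_C(d-1))$. The paper does exactly this in the proof of \Cref{prop:PolyconFromLDR}.
\end{enumerate}
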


The correspondence in \Cref{thm:contDiv=LDR} can be made explicit. A classical result states that if $M$ is a symmetric LDR of $C$, then the elements of  
\begin{equation*}
    \cC_M = \left\lbrace u^tM^{\textup{adj}}u \mid u \in k^d \right\rbrace
\end{equation*}
define contact curves of degree $d-1$ to $C$ (see \cite[Theorem 5]{Vinnikov1989LDR}, \cite[\S 4.1.4]{Dolgachev2012CAG}). Conversely, given a single contact curve $C' = \cV(a)$, one can construct the adjugate matrix $M^{\textup{adj}}$ of a symmetric LDR $M$ of $C$ following Dixon's Algorithm below. This algorithm was first attributed to \cite{Dixon1902Constr}.

\begin{prop}[Dixon's Algorithm, {\cite[\S 4.1]{Dolgachev2012CAG}}] \label{prop: Dixon}
 Let $C$ be a smooth plane projective curve of degree $d$ and let $D$ be a divisor on $C$ such that $2D$ is the intersection divisor of $C$ with a curve $C' = \cV(a_{11})$ of degree $d-1$. Furthermore assume
 \begin{equation*}
     \ell(D-H) = 0.
 \end{equation*}
 Then the following algorithm determines a symmetric LDR of $C$.
 \begin{enumerate}
     \item Extend $a_{11}$ to a linearly independent tuple
     \begin{equation*}
         a_{11}, a_{12}, \dots, a_{1d}
     \end{equation*}
     in $\cL((d-1)H - D)$, the space of degree $(d-1)$ polynomials vanishing in $D$.
     
     \item Using Max Noether's Fundamental Theorem (\cite[\S 5.5]{Fulton2008Curves}), for $2 \leq i \leq j \leq d$ find $a_{ij}$ satisfying
     \begin{equation*}
        a_{11}a_{ij} - a_{1i}a_{1j} \in (f).
     \end{equation*}
     
     \item Write $M^{\textup{adj}} = (a_{ij})_{i,j}$, completing the matrix by symmetry.
     
     \item Obtain the symmetric linear determinantal representation
     \begin{equation*}
        M = f^{-d+2}(M^{\textup{adj}})^{\textup{adj}}.
    \end{equation*}
 \end{enumerate}
\end{prop}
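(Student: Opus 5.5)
We need to prove that Dixon's algorithm produces a symmetric LDR of $C$. The plan is to show that the constructed $M^{\textup{adj}} = (a_{ij})$ has rank one modulo $f$, then extract $M$ from its adjugate and compare degrees.

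\emph{Step 1: the data exist.} By Riemann--Roch and $2D \sim (d-1)H$, one gets $\ell((d-1)H-D)=d$. Sketch: $\deg D = d(d-1)/2$ and $g=(d-1)(d-2)/2$. Since $K \sim (d-3)H$, Serre duality gives $\ell(K-((d-1)H-D)) = \ell(D-2H) \le \ell(D-H)=0$. Hence
\begin{equation*}
\ell((d-1)H-D) = \tfrac{d(d-1)}{2} - g + 1 = d.
\end{equation*}
Since $C$ is projectively normal, these sections are restrictions of degree $d-1$ forms; forms divisible by $f$ vanish identically and are discarded. So $a_{11},\dots,a_{1d}$ exist as required in step (1).

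For step (2), consider the form $a_{1i}a_{1j}$ of degree $2(d-1)$. Its divisor on $C$ is at least $2D = C'.C$, so it vanishes on $C'.C$ (as divisors, $C$ smooth). Noether's $AF+BG$ theorem then gives $a_{1i}a_{1j} = a_{11}a_{ij} + bf$ with $\deg a_{ij}=d-1$. Here $a_{ij}$ is unique modulo $(f)$, and $a_{ij}$ vanishes on $2D$ minus the part already accounted for.

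\emph{Step 2: $N := (a_{ij})$ has rank one along $C$.} On $C$ (away from $a_{11}=0$) we have $a_{ij} \equiv a_{1i}a_{1j}/a_{11}$. Thus $N|_C = a_{11}^{-1} uu^\top$ with $u=(a_{1i})_i$, so all $2\times 2$ minors of $N$ vanish on $C$, hence are divisible by $f$. Consequently every $k\times k$ minor with $k\ge2$ is divisible by $f^{k-1}$; this is the key algebraic fact. It follows by induction or via local computation: along $C$ write $N = a_{11}^{-1}uu^\top + fB$, and expanding any $k$-minor, the terms of order less than $f^{k-1}$ vanish because $uu^\top$ has rank one.

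Consequently $N^{\textup{adj}}$, whose entries are $(d-1)$-minors, is divisible entrywise by $f^{d-2}$. So $M := f^{-d+2}N^{\textup{adj}}$ is a symmetric matrix of forms of degree $(d-1)^2 - d(d-2) = 1$, i.e.\ linear. Moreover $\det N$ has degree $d(d-1)$ and is divisible by $f^{d-1}$, which also has degree $d(d-1)$. Hence $\det N = c f^{d-1}$ for a constant $c$, and therefore
\begin{equation*}
\det M = f^{-d(d-2)}(\det N)^{d-1} = c^{d-1} f^{(d-1)^2 - d(d-2)} = c^{d-1}f.
\end{equation*}

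\emph{Step 3 (main obstacle): $c\neq 0$.} This is where the hypotheses $\ell(D-H)=0$ and the linear independence of the $a_{1i}$ must enter. I would argue by contradiction: if $\det N \equiv 0$, then $N$ has a kernel vector $w$ over $k(x)$. Restricting the relation $Nw=0$ to $C$ yields $(u^\top w)u = 0$. I would then rather show $\det N\neq0$ at a point $p\in C$ by checking that the rank of $N$ drops to exactly one generically on $C$ but that $\det N$ vanishes only to order $d-1$ along $C$.

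Concretely, I would compute the $(d-1)$-minors restricted to $C$. Via the rank-one structure, $N^{\textup{adj}}/f^{d-2}$ restricted to $C$ is expressible through the first-order terms $B$ mod the span of $u$. Its vanishing would produce a nontrivial linear relation among the $a_{1i}$ modulo $f$, or a section of $\cO(D-H)$, contradicting linear independence or $\ell(D-H)=0$. This mirrors Vinnikov's argument behind \Cref{thm:contDiv=LDR}, which I would alternatively invoke directly: the sheaf map $\cO(-1)^d\to\cO^d$ has cokernel $\cO_C(D)$-twisted precisely when $h^0(\cO_C(D-H))=0$.

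Finally, I would verify that $M^{\textup{adj}}$ equals $N$ up to a scalar, so that the notation is consistent and $C'$ lies in $\cC_M$. This follows from the identity $(N^{\textup{adj}})^{\textup{adj}} = (\det N)^{d-2}N$, which gives $M^{\textup{adj}} = f^{-(d-2)(d-1)}c^{d-2}f^{(d-1)(d-2)}N = c^{d-2}N$.
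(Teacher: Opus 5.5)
The paper gives no proof of this proposition (it is quoted from Dolgachev), so your argument has to stand on its own. Steps 1 and 2 are correct: the Riemann--Roch count, Noether's theorem, the divisibility of $k\times k$ minors by $f^{k-1}$, $\det N=cf^{d-1}$, linearity of $M=f^{2-d}N^{\mathrm{adj}}$, $\det M=c^{d-1}f$, and $M^{\mathrm{adj}}=c^{d-2}N$. But these steps are formal. The whole content of the proposition is $c\neq0$, and your Step 3 only lists intentions; it does not prove this.

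Two of those intentions cannot work as stated. First, $\det N=cf^{d-1}$ vanishes identically on $C$, so there is no $p\in C$ with $\det N(p)\neq0$. Second, $u^\top w=0$ is simply the first entry of $Nw=0$. It is a syzygy with polynomial coefficients $w_j$, which does not contradict linear independence of the $a_{1j}$ over $k$. The remaining claim, that vanishing ``would produce a linear relation or a section of $\cO(D-H)$'', is exactly what has to be proved.

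Nothing you actually proved uses $\ell(D-H)=0$ beyond $\ell(D-2H)=0$. Take $d=3$ and $a_{11}=l^2$ for a line $l$ meeting $C$ in three distinct points, so $D=l.C\sim H$ and $\ell(D-H)=1$. Then $\ell(D-2H)=0$, so Step 1 goes through with $a_{1j}=lm_j$, where $m=(l,m_2,m_3)^\top$ has independent entries. Step 2 gives $a_{ij}=m_im_j$, i.e.\ $N=mm^\top$. All your conclusions hold here with $c=0$ and $M=0$.

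Part of the missing step is elementary. If $c=0$, then $M$ is a symmetric matrix of linear forms with $NM=0$ and $\rk M\leq1$. If $\rk M=1$, the $2\times2$ minors force $M=\ell\,\lambda\lambda^\top$ with $\lambda\in k^d$ constant. Then $N\lambda=0$, and the first row gives $\sum_j\lambda_ja_{1j}=0$, contradicting linear independence. What remains is to exclude $\rk N\leq d-2$. That is precisely the case of the example above, and it is where $\ell(D-H)=0$ must enter.

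This needs genuine input, for instance the sheaf theory behind \Cref{thm:contDiv=LDR}:
\begin{enumerate}
\item Since $2(D-H)\sim(d-3)H=K_C$, Serre duality gives $h^1(\cO_C(D-H))=h^0(\cO_C(D-H))=0$.
\item By Beauville, $\cO_C(D)$ then has a resolution $0\to\cO(-1)^d\xrightarrow{M_0}\cO^d\to\cO_C(D)\to0$ with $M_0$ a symmetric LDR.
\item The contact divisor of $v^\top M_0^{\mathrm{adj}}v$ is the divisor of the section of $\cO_C(D)$ given by $v$. Hence every divisor in $|D|$ arises this way.
\item Choose $v$ with contact divisor $D$ and a congruence $T$ with first row $v^\top$, so that the $(1,1)$-entry becomes $\mu a_{11}$. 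Then apply a congruence with first row $e_1^\top$ to turn the first row into $\mu(a_{11},\dots,a_{1d})$.
\item The $a_{ij}$ are unique, since they have degree $d-1<d$. This forces the transformed adjugate to equal $\mu N$, so $\det N\neq0$.
\end{enumerate}
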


The equivalence class of $M$ so constructed is independent of all choices made in this algorithm. In particular, if $\widetilde{C} = \cV(\widetilde{a})$ is a curve such that $\widetilde{C}.C \sim C'.C$, then there exists a change of basis $T \in \operatorname{GL}_d(k)$ such that $\widetilde{a}$ is the $(1,1)$-entry of $TM^{\textup{adj}}T^\top$.

\begin{rem}
 The adjoint of a polycon need not be smooth. This, however, does not pose a problem. Max Noether's Fundamental Theorem, on which Dixon's Algorithm relies, is applicable as long as the contact points of curves occurring in the algorithm are smooth points (\cite[\S 5.5, Proposition 1]{Fulton2008Curves}). For us this is guaranteed by \Cref{lem:AdjOffBoundary}. The only problem to be addressed is whether the determinant of the supposed determinantal representation is zero or not. For completeness we address this problem in \Cref{prop:singDixon} below, which is a special case of \cite[Proposition 5.4.]{KernerVinnikov2012singularDetRep}.
\end{rem}

\begin{prop}[Dixon's Algorithm for Singular Curves] \label{prop:singDixon}
 Let $C$ be a plane projective curve of degree $d$ and let $D$ be a divisor supported in its regular locus such that $2D$ is the intersection divisor of $C$ with a curve $C'$ of degree $d-1$. Furthermore assume
 \begin{equation*}
     h^0(C, \cO_C(D-H)) = 0.
 \end{equation*}
 Then there exists an associated symmetric LDR of $C$. Its equivalence class is independent of the divisor class of $D$.
\end{prop}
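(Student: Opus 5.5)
We run Dixon's Algorithm (\Cref{prop: Dixon}) verbatim and then show that the output is a nonzero multiple of the equation of $C$. Throughout, the hypotheses that $D$ lies in the regular locus of $C$ and that $h^0(C,\cO_C(D-H))=0$ replace smoothness.

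\textbf{Step 1 (construction).} Write $C'=\cV(a_{11})$. Since $2D=C'.C$ and $D$ is supported on smooth points of $C$, we can work with $\cO_C(D)$ as a line bundle, together with the space $L=H^0(C,\cO_C((d-1)H-D))$ of degree $d-1$ forms vanishing on $D$. We then need $\dim L\ge d$. For this we use the exact sequence
\begin{equation*}
0\to \cO_{\pp^2}(-1)\to \cO_{\pp^2}(d-1)\to \cO_C(d-1)\to 0,
\end{equation*}
duality on the Gorenstein curve $C$ (whose dualizing sheaf is $\omega_C=\cO_C(d-3)$), and Riemann--Roch on $C$. Together with the vanishing $h^0(D-H)=0$, these give $h^0((d-1)H-D)=d$ and show that every such section lifts to a form of degree $d-1$.

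Next choose a basis $a_{11},\dots,a_{1d}$ of $L$. Each $a_{1i}a_{1j}$ vanishes on $2D=C'.C$, at smooth points of $C$. Max Noether's theorem in the form \cite[\S 5.5, Proposition 1]{Fulton2008Curves}, which needs exactly that the intersections are at smooth points of $C$, then gives forms $a_{ij}$ with $a_{11}a_{ij}-a_{1i}a_{1j}\in(f)$. Set $N=(a_{ij})$.

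\textbf{Step 2 (rank and determinant).} The classical argument shows that all $2\times 2$ minors of $N$ vanish modulo $f$. Hence $N$ has rank at most $1$ on $C$, so $\det N$ is divisible by $f^{d-1}$. Degree counting ($\deg\det N=d(d-1)$) then gives $\det N=\gamma f^{d-1}$ for some $\gamma\in k$. Likewise $N^{\textup{adj}}$ is divisible by $f^{d-2}$, and the matrix $M=f^{-d+2}N^{\textup{adj}}$ is linear and symmetric with $\det M=\gamma^{d-1}f$.

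\textbf{Step 3 (main obstacle: $\gamma\neq 0$).} This is where singularities matter. Suppose $\det N\equiv 0$. Then there is a nonzero polynomial vector $u$ with $Nu=0$, and its first entry gives the relation $\sum_j u_j a_{1j}=0$. Restricted to $C$, and using the rank-one structure $a_{ij}\equiv a_{1i}a_{1j}/a_{11}$, this forces a nontrivial syzygy among the $a_{1j}$ modulo $f$. Equivalently, the map
\begin{equation*}
H^0(\cO_C(H))\otimes L\to H^0(\cO_C(dH-D))
\end{equation*}
would fail to be injective in the required sense. A kernel element produces a nonzero section of $\cO_C(D-H)$, namely the ratio of a section of $L$ and a linear form, contradicting $h^0(D-H)=0$. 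I expect the care here to lie in handling division on a possibly reducible or singular $C$. Because $D$ avoids the singular points and $C'$ shares no component with $C$, the $a_{1j}$ are nonzero divisors on $C$, so these ratios make sense as rational sections.

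\textbf{Step 4 (independence).} If $D'\sim D$, a second curve $\widetilde C$ with $\widetilde C.C=2D'$ has equation in $L$ by the lifting from Step 1. So the only change is a different basis of the same space $L$, which conjugates $N$ by some $T\in\operatorname{GL}_d(k)$. Rescaling $a_{11}$ multiplies $N$ by a scalar. Hence $M$ changes by $T M T^\top$ up to a scalar, and since we work over $k\in\{\R,\C\}$ we can absorb the scalar up to sign. This gives the claimed independence of the divisor class.
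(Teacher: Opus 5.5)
\textbf{Step 3 has a genuine gap, and it is the whole content of the proposition.} Steps 1 and 2 are sound: the count $h^0(\cO_C((d-1)H-D))=d$ (using $h^0(\cO_C(D-2H))=0$), the Noether step at smooth points of $C$, and $\det N=\gamma f^{d-1}$ with $M=f^{-d+2}N^{\textup{adj}}$ linear are all correct. The argument in Step 3, however, fails already for smooth $C$.

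You use only the first entry of $Nu=0$, restricted to $C$. But on $C$ every row of $N$ is a multiple of the first, so no information about $\det N$ survives. Moreover, a syzygy among $a_{11},\dots,a_{1d}$ is never contradictory. Whenever $\gamma\neq0$ you have $NM=\gamma f I$, so the columns $2,\dots,d$ of $M$ give nontrivial \emph{linear} syzygies $\sum_j a_{1j}M_{jk}=0$ that hold identically. Equivalently, the Riemann--Roch computation of your Step 1 gives $h^0(\cO_C(dH-D))=2d<3d$, so the map $H^0(\cO_C(H))\otimes L\to H^0(\cO_C(dH-D))$ \emph{always} has a kernel of dimension at least $d$.

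The base-point-free pencil trick turns a kernel element into a section of $\cO_C((d-2)H-D)\cong\cO_C(D-H)$ only when it lies in $V\otimes L$ for a base-point-free pencil $V\subset H^0(\cO_C(H))$. Your hypothesis says exactly that these restricted maps are injective, which is compatible with the kernel above. Nothing forces your relation to be of that special form; indeed the entries of $u$ need not even be linear. So the implication $\det N=0\Rightarrow h^0(\cO_C(D-H))\neq0$ remains unproved.

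\textbf{What an elementary argument does and does not cover.} One case is easy. If $\gamma=0$ and $\rk N=d-1$, then $f^{-d+2}N^{\textup{adj}}$ is a nonzero symmetric rank-one matrix of linear forms, hence of the form $\ell\,cc^\top$ with $c$ constant. Then $Nc=0$ contradicts the independence of the first row. The hypothesis is needed precisely when $\rk N\le d-2$. An example is $d=3$ with $D=\cV(x_0).C$, where $N=(x_ix_j)_{0\le i,j\le2}$ has rank one.

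\textbf{How the paper handles it.} The paper does not prove $\gamma\neq0$ by hand. It checks that the line bundle $\cF=\cO_C(D)$ satisfies $\cF\cong\cF^\vee\otimes\cO_C(d-1)$ and $h^0(\cF(-1))=h^1(\cF(-1))=0$. The $h^1$ vanishing follows from Serre duality with $\omega_C=\cO_C(d-3)$ and $(d-2)H-D\sim D-H$. It then invokes Kerner--Vinnikov (after Beauville), which yields a symmetric resolution $0\to\cO_{\pp^2}(-1)^d\xrightarrow{M}\cO_{\pp^2}^d\to\iota_*\cF\to0$. There $\det M\neq0$ is automatic because the cokernel is torsion, and your $N$ agrees with $M^{\textup{adj}}$ up to congruence and scaling. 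Replacing your Step 3 by this is the natural fix.

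\textbf{A smaller, fixable slip in Step 4.} The equation $\widetilde a$ of $\widetilde C$ is not in $L$, since it vanishes on $2D'$, not on $D$. The correct statement is as follows. Up to a scalar, $\widetilde a=(TNT^\top)_{11}$, where the first row of $T$ corresponds to the section $s\in L$ with $s.C=D+D'$. The new first row then spans $H^0(\cO_C((d-1)H-D'))$ rather than $L$.
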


\begin{proof}
 We want to establish the conditions for the sheaf $\cO_C(D)$ laid out in \cite[Proposition 5.4]{KernerVinnikov2012singularDetRep}, which are the following (compare this also with \cite[Proposition 1.11, Theorem B]{Beauville2000DetHypSurf}):
 \begin{enumerate}
     \item $\cO_C(D) \cong \cO_C(-D) \otimes \cO_C(d-1)$.
     \item $h^0(C, \cO_C(D-H)) = 0 = h^1(C, \cO_C(D-H))$.
 \end{enumerate}
 Since $D$ is supported in the smoooth locus of $C$, the sheaf $\cO_C(-D)$ is a line bundle by \cite[Proposition 6.13.(a)]{Hartshorne1977AG}. Then we have $2D \sim (d-1)H$, which proves the first condition. For the second condition we use the dualizing sheaf $\omega_C \cong \cO_C((d-3)H)$ on $C$ and Serre duality (\cite[\S2.1.3]{KernerVinnikov2012singularDetRep}), by which we obtain
 \begin{equation*}
     h^1(C, \cO_C(D - H)) = h^0(C, \cO_C((d-3)H-(D-H)) = h^0(C, \cO_C(D-H)).
 \end{equation*}
 This proves the second condition by our hypotheses.
\end{proof}

\begin{rem}
 An explicit algorithm to construct a symmetric LDR from the divisor $D$ in the singular case is given in the proof of \cite[Theorem 4.3]{KernerVinnikov2012singularDetRep}. It is an immediate generalization of the algorithm in \Cref{prop: Dixon}.
\end{rem}

\begin{rem} \label{rem:DixonVanishing}
 For later reference we note that in Dixon's Algorithm we obtain divisors $D_i$ such that $A_{11}.C = 2D$ and $A_{1i}.C = D+D_i$ for some effective divisors $D_i$. Then $A_{ij} = D_i + D_j$ for $2 \leq i,j \leq d$.
\end{rem}

\section{Results from Algebraic Geometry} \label{sec:mainResult}

We now present some additional results of this paper, which are based on the concepts introduced in \Cref{sec:PrelimAG}. First, we establish that for the adjoint curve $A$ of a polycon $\cP$ there is a natural family of mutually linearly equivalent contact divisors, which give rise to determinantal representations. We show that the corresponding contact curves are the adjoint curves of the polycons, which are obtained by replacing a conic boundary component of $\cP$ with a line.

The second result is an explicit description of the fibers of the adjoint map $\alpha_{2,2,2}$. This tells us how $\cP$ may be deformed while preserving its adjoint curve $A$. This is synonymous to applying a symmetric change of basis to a determinantal representation $M$ of $A$. This allows us to conclude that the variety $Y_{2,2,2}$ is irreducible and that the adjoint curve of a generic polycon bounded by three conics is smooth.

\subsection{Contact Curves to the Adjoint} \label{sec:contactResults}

We begin by recalling some notation and by making some general assumptions for this section. Let $\cP$ be a nodal polycon. This will be a standing assumption for this section as we rely on the regularity part of \Cref{lem:AdjOffBoundary}. Also note that this is generically satisfied.

We denote the boundary components of $\cP$ by $(C_i)_{i=1}^n$ with $C_i = \cV(c_i)$ and its vertices by $(v_{i,i+1})_{i=1}^n$. We write $d = \deg(C_\cP)$. After possibly relabeling we assume that $\deg(C_1) = 2$. By $A = \cV(\alpha)$ we denote the adjoint curve of $\cP$ and note that $\deg(A) = d-3$. Finally we let $\cP'$ be the polycon obtained by replacing $C_1$ with the unique line $L = \cV(l)$ containing $v_{n1}$ and $v_{12}$. Its adjoint curve is denoted by $A' = \cV(\alpha')$.

\begin{rem}
 Even though we assume $\cP$ to be a regular polycon, we do not impose the same restriction on $\cP'$ (see \Cref{fig:ProblematicReduction} for a visualization).
\end{rem}

\begin{figure}[!ht]
     \centering
     \includegraphics[width=0.48\textwidth, trim={1cm 3.5cm 2cm 3cm}, clip]{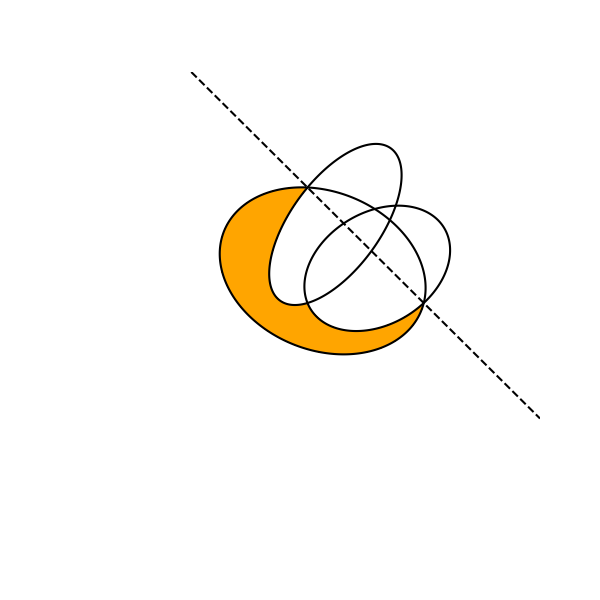}
     \includegraphics[width=0.48\textwidth, trim={1cm 3.5cm 2cm 3cm}, clip]{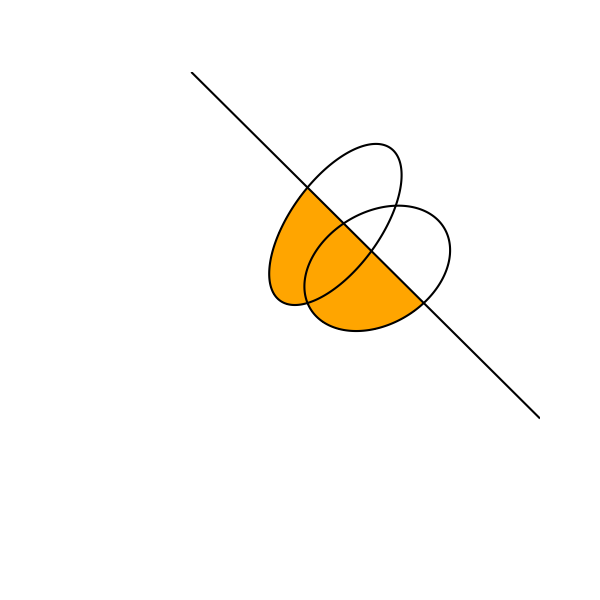}
     \caption{A regular degree six polycon (left), for which replacing one of the boundary conics with a line results in a degree five polycon that is not regular (right).}
     \label{fig:ProblematicReduction}
\end{figure}

We prove that the two curves $A$ and $A'$ are contact curves. This lets us relate adjoints and determinantal representations, and by \Cref{thm:contDiv=LDR} this can be expressed using certain divisors on $A$. Our result is a stronger version of the following proposition, which requires $A$ to be smooth and does not explicitly identify the contact curve as $A'$.

\begin{prop}[{\cite[Proposition 2.2]{AgostiniPlaumannSinnWesner2024AdjPolypol}}]\label{prop:ThetaOnAdj}
 Let $\cP$ be a nodal polycon whose adjoint curve $A$ is smooth. Then the divisor $T = R(\cP) - 3H$ on $A$ satisfies $2T \sim (d-3)H$ and $\ell(T) = 0$.
\end{prop}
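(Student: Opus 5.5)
The plan is to work on the smooth curve $A$ (degree $d-3$), using two facts from earlier: \Cref{lem:AdjOffBoundary}, which controls $A\cap C_\cP$, and \Cref{prop:uniqueAdj}, the uniqueness of the adjoint. The linear-equivalence part comes from a restriction computation, and $\ell(T)=0$ from a cohomology sequence on $\pp^2$.

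\textbf{Step 1: the divisor $2R$.} By \Cref{lem:AdjOffBoundary}, $A$ meets $C_\cP$ only in $R(\cP)$. At each residual point $p$, which is a node on exactly two components $C_i,C_j$, the curve $A$ meets each of $C_i,C_j$ with multiplicity $1$. Summing over the components gives
\begin{equation*}
  C_\cP.A=\sum_i C_i.A = 2R(\cP),
\end{equation*}
where $R(\cP)$ is viewed as a reduced divisor on $A$. Since $C_\cP.A\sim dH$, we get $2R(\cP)\sim dH$. Hence $2T=2R(\cP)-6H\sim (d-6)H=K_A$, using adjunction for the plane curve $A$ of degree $d-3$.

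A degree check agrees with this. We have $\deg R(\cP)=d(d-3)/2$, so $\deg T=(d-3)(d-6)/2=g(A)-1$. Thus $T$ is a theta characteristic, and the class $(d-3)H$ in the statement should be read as $K_A=(d-6)H$. I would state the result in this form.

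\textbf{Step 2: rewriting $T$.} From $2R(\cP)\sim dH$ we get $T=R(\cP)-3H\sim (d-3)H-R(\cP)$. So it suffices to show $h^0(A,\cO_A((d-3)H-R(\cP)))=0$.

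\textbf{Step 3: computing this via $\pp^2$.} Let $\mathcal I_R$ be the ideal sheaf of the reduced point set $R(\cP)$. Since $A$ is smooth and $R(\cP)\subset A$, we have $\mathcal I_R\cdot\cO_A=\cO_A(-R(\cP))$, and there is an exact sequence
\begin{equation*}
  0\to \cO_{\pp^2}\xrightarrow{\cdot\alpha}\mathcal I_R(d-3)\to \cO_A((d-3)H-R(\cP))\to 0 .
\end{equation*}
Because $H^1(\pp^2,\cO_{\pp^2})=0$, every section on $A$ lifts to a form of degree $d-3$ vanishing on $R(\cP)$. For a nodal polycon such forms are exactly adjoint polynomials, so by \Cref{prop:uniqueAdj} the space $H^0(\pp^2,\mathcal I_R(d-3))$ is spanned by $\alpha$. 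This gives $h^0(\cO_A(T))=1-1=0$, i.e.\ $\ell(T)=0$.

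\textbf{Main obstacle.} The delicate step is Step 3. It needs the identification $\mathcal I_R|_A=\cO_A(-R)$ and exactness on the right, both of which use the smoothness of $A$ assumed in the statement. It also needs that for nodal $\cP$ the conductor conditions reduce to simple vanishing at $R(\cP)$, so that the uniqueness in \Cref{prop:uniqueAdj} really says $h^0(\mathcal I_R(d-3))=1$. Step 1 is routine once \Cref{lem:AdjOffBoundary} is applied componentwise.
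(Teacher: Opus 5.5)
Your proof is correct, and so is your reading of the statement. Since $\deg T=(d-3)(d-6)/2$, the class $2T$ cannot be $(d-3)H$; the intended relation is $2T\sim(d-6)H=K_A$, which is also what the paper's own argument yields. Your route differs from the paper's, though. The paper obtains the proposition as a by-product of \Cref{thm: adj_recursion} and the lemma after it. Replacing a boundary conic $C_1$ by the line $L$ through $v_{n1},v_{12}$ gives a polycon $\cP'$ whose adjoint $A'$, of degree $d-4$, is a contact curve to $A$ with contact divisor $D=R(\cP)\cap R(\cP')$; this is proved via additivity of adjoints under the ``triangulation'' of $\cP$ into $\cP'$ and a circle segment. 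Since $R(\cP)-D=C_1.A\sim 2H$, one gets $D\sim T+H$, hence $2T\sim(d-4)H-2H$. Finally $\ell(T)=h^0(\cO_A(D-H))=h^0(\cO_A((d-5)H-D))$ vanishes, because a curve $B$ of degree $d-5$ through $D$ would make $B\cup L$ an adjoint of $\cP'$ containing a boundary component. You instead use uniqueness of the adjoint of $\cP$ itself. The identity $C_\cP.A=2R(\cP)$ gives the linear equivalence, and the restriction sequence $0\to\cO_{\pp^2}\to\mathcal I_R(d-3)\to\cO_A((d-3)H-R(\cP))\to 0$, together with $H^1(\pp^2,\cO_{\pp^2})=0$, turns $h^0(\mathcal I_R(d-3))=1$ into $\ell(T)=0$. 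Your argument is shorter and more general. It needs no conic boundary component, so it also covers polygons, and it needs neither the additivity theorem nor any information about the auxiliary polycon $\cP'$. It also only uses that $R(\cP)$ lies in the smooth locus of $A$, which \Cref{lem:AdjOffBoundary} guarantees for every nodal polycon. So it works verbatim for singular $A$, with $K_A$ replaced by $\omega_A\cong\cO_A((d-6)H)$. What the paper's route buys is an explicit identification of effective representatives of $[T+H]$ as contact divisors of the adjoints of the reduced polycons. That is exactly the input Dixon's algorithm needs to produce the symmetric LDR in \Cref{lem:LDRFromPolycon}, with the boundary conics as off-diagonal entries of $M^{\textup{adj}}$.
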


While we will prove this proposition by our own means, it is useful to draw some immediate conclusions from it. Specifically, effective representatives of the divisor class $[R(\cP) - 2H] = [T+H]$ correspond to contact curves to $A$ of degree $\deg(A)-1$, and their contact divisors satisfy the conditions of \Cref{thm:contDiv=LDR}. Hence they correspond to symmetric LDRs. We can immediately observe that the intersection $R(\cP) \cap R(\cP')$ is such an effective representative: indeed, by B\'{e}zout's theorem there are exactly
\begin{equation*}
    2(d-2) - 2 = 2(d-3)
\end{equation*}
residual points of $\cP$, in which $C_1$ vanishes. The remaining residual points (those residual for both $\cP$ and $\cP'$) form a set of cardinality
\begin{equation*}
    |R(\cP) \cap R(\cP')| = |R(\cP)| - 2(d-3) = \frac{1}{2}(d-1)(d-2) - 1 - 2(d-3) = \frac{1}{2}(d-3)(d-4).
\end{equation*}
Thus, there exists a unique curve of degree $d-4$ intersecting $A$ with multiplicity two in each of the points in $R(\cP) \cap R(\cP')$. By the definition of $R(\cP')$ it is also immediate that $A'$ vanishes on $A$ in at least this set of points. However, it is not immediately clear that the intersection multiplicity is always even.

In the following we present two separate results: first, we show that $A$ and $A'$ are indeed contact curves (see \Cref{fig:AdjointTangent}). Second we prove that the contact divisor $D$ of $A'$ on $A$ satisfies $h^0(A,\cO_A(D-H)) = 0$. This in particular gives an independent proof of \Cref{prop:ThetaOnAdj}.

\begin{figure}[!ht]
     \centering
     \includegraphics[width=0.7\textwidth, trim={0cm 6cm 0cm 4cm}, clip]{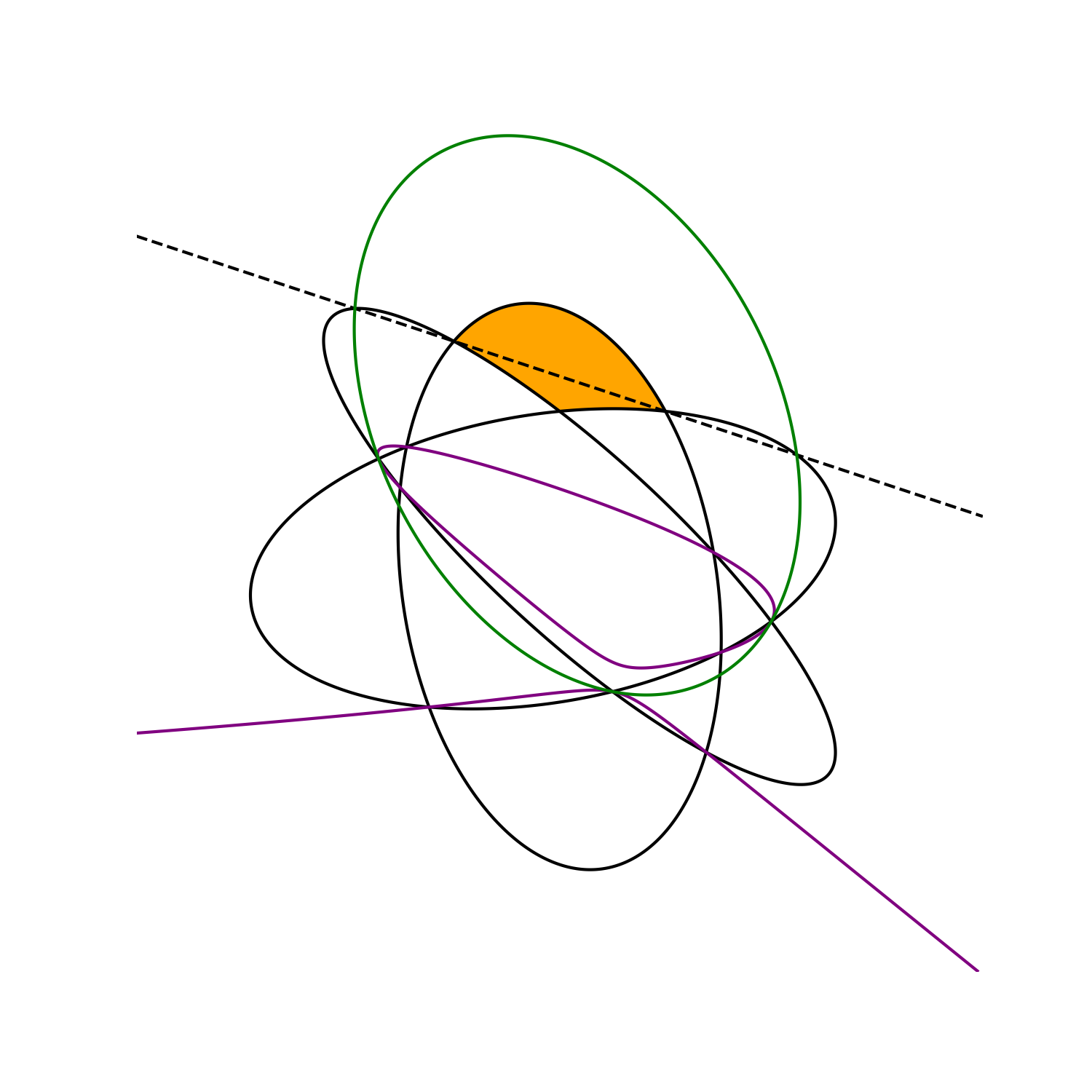}
     \caption{A polycon $\cP$ (orange) with its adjoint (purple). The green conic is the adjoint of the polycon obtained by replacing the ``inflated'' side of $\cP$ with the dashed line.}
     \label{fig:AdjointTangent}
\end{figure}

\begin{thm} \label{thm: adj_recursion}
 The curves $A$ and $A'$ are contact curves. More precisely,
 \begin{equation*}
     A'.A = 2(R(\cP) \cap R(\cP')).
 \end{equation*}
\end{thm}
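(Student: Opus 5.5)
The plan is to establish the polynomial identity
\begin{equation*}
  l\,\alpha \;=\; c_1\,\alpha' + \kappa\, c_2 c_3\cdots c_n, \qquad \kappa\in k^\times,
\end{equation*}
and then read off intersection divisors on $A$. Heuristically, this is the additivity of canonical forms: $\cP$ is $\cP'$ glued to the lens bounded by $L$ and $C_1$, whose adjoint is a constant. I will not use positive geometry, though, and will instead construct $\alpha'$ directly from the identity.

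\textbf{Step 1 (choice of $\kappa$).} Assume first that $C_1$ is irreducible, so $C_1\cong\pp^1$. Restrict the two degree $d-2$ forms $l\alpha$ and $c_2\cdots c_n$ to $C_1$; both then have divisors of degree $2(d-2)$.
\begin{itemize}
  \item By \Cref{lem:AdjOffBoundary}, $\alpha|_{C_1}$ vanishes exactly on the $2(d-3)$ residual points lying on $C_1$, each simply. Since $L\cap C_1=\{v_{n1},v_{12}\}$, the divisor of $l\alpha|_{C_1}$ is $(R(\cP)\cap C_1)+v_{n1}+v_{12}$.
  \item The product $c_2\cdots c_n$ vanishes on $C_1$ exactly at the points of $C_1\cap C_j$ for $j\ge 2$. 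These are the same residual points together with $v_{n1}$ and $v_{12}$, all simple because $\cP$ is nodal.
\end{itemize}
Equal divisors on $\pp^1$ force the restrictions to be proportional. Hence there is a $\kappa$ with $c_1 \mid l\alpha-\kappa c_2\cdots c_n$, and we set $\alpha' := (l\alpha-\kappa c_2\cdots c_n)/c_1$, of degree $d-4$. Moreover $\kappa\neq 0$, since otherwise $c_1\mid l\alpha$, contradicting \Cref{prop:uniqueAdj}. If $C_1$ is a line pair, I would run the same argument on each line, checking that the implicit vertex causes no trouble.

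\textbf{Step 2 (identify $\alpha'$ as the adjoint of $\cP'$).} We must check that $\alpha'$ vanishes on $R(\cP')$, which consists of two kinds of points.
\begin{itemize}
  \item Points of $C_j\cap C_k$ with $j,k\ge2$: here both $\alpha$ and $c_2\cdots c_n$ vanish, and $c_1\neq 0$ by nodality, so $\alpha'$ vanishes.
  \item Points of $L\cap C_j$ other than the vertices: here $l$ and $c_2\cdots c_n$ vanish, so $\alpha'$ vanishes wherever $c_1\neq0$.
\end{itemize}
If $\cP'$ is nodal, this is exactly the adjoint condition, and \Cref{prop:uniqueAdj} gives $A'=\cV(\alpha')$. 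Possible degeneracies of $\cP'$ — for instance $L$ passing through a residual point or a point of $C_1$ — need extra care. I expect this to be the main obstacle, together with the line-pair case. My first attempt would be to prove the identity for generic $\cP$, where $\cP'$ is nodal, and extend it by a closure/continuity argument using irreducibility and the fact that both sides depend polynomially on the data. Alternatively, one can check the extra conductor conditions of \Cref{rem: AdjUnique_pathologies} by hand.

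\textbf{Step 3 (divisors).} Restrict the identity to $A$, where $l\alpha=0$, so that $c_1\alpha' = -\kappa\, c_2\cdots c_n$ on $A$. Taking intersection divisors on $A$ gives
\begin{equation*}
  A'.A + C_1.A = \sum_{j\ge2} C_j.A.
\end{equation*}
This is legitimate because all points involved are smooth points of $A$ by \Cref{lem:AdjOffBoundary}, and each $C_j.A$ is the reduced set of residual points on $C_j$. Now compare the two sides.
\begin{itemize}
  \item A residual point of $C_1\cap C_j$ occurs once on each side and cancels.
  \item A residual point of $C_j\cap C_k$ with $j,k\ge 2$ occurs twice on the right and not at all in $C_1.A$.
\end{itemize}
Therefore $A'.A = 2(R(\cP)\cap R(\cP'))$, which is the claimed contact statement.
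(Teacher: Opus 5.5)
Your strategy is sound and differs from the paper's. The paper gets the identity $b\alpha l=b'\alpha'c_1+b_0\alpha_0\prod_{i\ge2}c_i$ from Wachspress's additivity theorem for subdividing a real polycon, so there $\alpha'$ is the adjoint of $\cP'$ by construction. It then compares linear terms of $\alpha$ and $\alpha'$ at each $R\in R(\cP)\cap R(\cP')$, handling $R\in L$ by the triple-point condition that makes $A'$ singular at $R$; the final equality then comes from a degree count.

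You instead prove the identity intrinsically by restricting to $C_1\cong\pp^1$. This is correct, and the line-pair case is routine: the constants found on the two lines agree because $c_2\cdots c_n$ does not vanish at the implicit vertex of $C_1$, by nodality. Your Step 3 reads off all of $A'.A$ from additivity of intersection multiplicities. It also shows that $A$ and $A'$ share no component and that $\alpha'\neq0$, since otherwise every component of $A$ would be a boundary component. This needs neither reality of $\cP$ nor a subdivision, so it covers the complex nodal polycons used in \Cref{lem:LDRFromPolycon}. The price is that identifying $\cV(\alpha')$ with $A'$ becomes your burden.

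That is where the proof is incomplete. Step 2 is only carried out for nodal $\cP'$. Your closure argument would need the conductor-defined adjoint of a non-nodal polycon to be the limit of adjoints of nearby nodal polycons, which the paper offers only as motivation in \Cref{rem: AdjUnique_pathologies}.

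The direct check is short:
\begin{itemize}
\item At every non-vertex singular point $P$ of $C_{\cP'}$ you have $c_1(P)\neq0$, because $L\cap C_1=\{v_{n1},v_{12}\}$ and $\cP$ is nodal. So near $P$ the form $\alpha'$ is a unit multiple of $l\alpha-\kappa c_2\cdots c_n$.
\item If $L$ passes through a residual point $R\in C_j\cap C_k$, then $l\alpha$ and $c_jc_k$ both vanish to order two at $R$, hence so does $\alpha'$. This is exactly the condition the paper uses.
\item If $L$ is tangent to $C_j$ at $Q$, then locally $\alpha'\in(l,c_j)$, which is the conductor of the tacnode. So $A'$ passes through $Q$ tangent to $L$.
\item In general, $\alpha$ satisfies the conductor conditions of $C_2\cup\dots\cup C_n$ at $P$. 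Moreover, $l$ times that conductor plus $(c_2\cdots c_n)$ lies in the conductor of $L\cup C_2\cup\dots\cup C_n$, because $L$ is smooth.
\item The same local reading of the identity gives the vertex conventions of that remark. This covers $v_{12}$ or $v_{n1}$ when $L$ is tangent to $C_2$ or $C_n$ there, and any vertex $v_{j,j+1}$ lying on $L$.
\end{itemize}
With this added, your argument proves the theorem.
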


\begin{proof}
 Note that it makes sense to define the divisor $A'.A$ in this fashion, since by assumption $\cP$ is nodal and hence by \Cref{lem:AdjOffBoundary} $R(\cP)$ is contained in the smooth locus of $A$. The main technical tool that we use in this proof is that adjoints behave additively in ``triangulations'' of polycons.
 
 We thus define two suitable polycons, into which we intend to partition $\cP$. The first, $\cP_0$ with adjoint $A_0 = \cV(\alpha_0)$, is the circle segment bounded by $\sigma(C_1)$ and $L$. The second is the polycon $\cP'$ with adjoint $A' = \cV(\alpha')$. The degrees of their respective adjoints are
 \begin{equation*}
     \deg(\alpha_0) = 0, \qquad \deg(\alpha') = d - 4.
 \end{equation*}
 Now by \cite[Theorem 5.1]{Wachspress2016Adjoints} there exist real numbers $b, b', b_0$, not all zero, such that
 \begin{equation*}
     b\alpha l = b'\alpha'c_1 + b_0\alpha_0\prod_{i\geq2}c_i.
 \end{equation*}
 We proceed to show that $\alpha$ and $\alpha'$ have the same tangent direction in every point $R \in R(\cP) \cap R(\cP')$. After choosing an affine chart containing $R$ and after a linear change of coordinates we may assume $R = (0,0)$ and it suffices to prove that the linear terms of $\alpha$ and $\alpha'$ are scalar multiples. Since $\prod_{i\geq2} c_i$ vanishes to order two in $R$, it does not have a linear term. This lets us reduce considerations to
 \begin{equation} \label{eq:TangentCondition}
     b\alpha l = b'\alpha'c_1 \mod (x,y)^2.
 \end{equation}
 We make a case distinction based on the possibility that $R \in L$.
 \begin{enumerate}
     \item $R \not \in L$: in this case neither $c_1$ nor $l$ vanish in $R$, hence they are invertible in $\R[x,y]_{(x,y)}$. Localizing \Cref{eq:TangentCondition} at the maximal ideal $(x,y)$ thus yields
     \begin{equation*}
         b\alpha \equiv b'\alpha' \mod (x,y)^2.
     \end{equation*}

     \item $R \in L$: in this case, $A'$ has a singular point in $R$ (\Cref{rem: AdjUnique_pathologies}). In particular, $\alpha'$ does not have a linear term, such that the claim is vacuously satisfied. \qedhere
 \end{enumerate}
\end{proof}

\begin{rem}
 \Cref{thm: adj_recursion} generalizes \cite[Proposition 5.4]{BrueserKummerPavlov2025Adjoints}, which proves the same statement for convex polygons.
\end{rem}

\begin{rem}
 We briefly mentioned in \Cref{sec:posGeom} that real polycons are related to positive geometries. Our proof of \Cref{thm: adj_recursion} translates line by line to the latter setting with only minor adaptations, using that canonical forms are additive in triangulations of positive geometries (\cite[\S3]{arkani2017PosGeom}, \cite[Theorem 11]{Lam2022PosGeom}, \cite[Proposition 2.18]{KohnEtAl2024AdjCurves}).
\end{rem}

\Cref{thm: adj_recursion} holds even if $A$ is singular, due to the regularity part of \Cref{lem:AdjOffBoundary}. We now want to prove that the contact divisor between $A'$ and $A$ satisfies the condition laid out in \Cref{prop:singDixon}.

\begin{lem}
 Let $\cP$ be a nodal polycon. Write $D = R(\cP) \cap R(\cP')$ (with $\cP'$ as in \Cref{thm: adj_recursion}). Then the associated sheaf of $D$ on $A$ satisfies the conditions of \Cref{prop:singDixon}.
\end{lem}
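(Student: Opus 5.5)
The plan is to check the three hypotheses of \Cref{prop:singDixon} for $D = R(\cP)\cap R(\cP')$ on the curve $A$, which has degree $d-3$. The last of them is the real content; the rest follows from earlier results.

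First, the support condition. Since $\cP$ is nodal, \Cref{lem:AdjOffBoundary} shows that $A$ is smooth at every point of $R(\cP)$, so $D$ is supported in the regular locus of $A$. Second, the contact condition. By \Cref{thm: adj_recursion}, $A'.A = 2D$, and $\deg A' = d-4 = \deg A - 1$, which is exactly the contact curve required in \Cref{prop:singDixon}. In particular $2D \sim (d-4)H$ on $A$.

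It remains to show $h^0(A,\cO_A(D-H))=0$. From $2D\sim (d-4)H$ we get
\begin{equation*}
D - H \sim (d-5)H - D .
\end{equation*}
Because $D$ lies in the smooth locus, $\cO_A((d-5)H-D)$ is the subsheaf of $\cO_A(d-5)$ of sections vanishing on $D$. The sequence $0\to\cO_{\pp^2}(-2)\to\cO_{\pp^2}(d-5)\to\cO_A(d-5)\to 0$ together with $H^1(\pp^2,\cO_{\pp^2}(-2))=0$ shows that global sections of $\cO_A(d-5)$ are exactly restrictions of forms of degree $d-5$. No such form can vanish identically on $A$, since $\deg A = d-3 > d-5$. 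Hence $h^0(A,\cO_A(D-H))$ equals the dimension of the space of degree $d-5$ forms vanishing on the point set $D$, and the claim reduces to showing that no curve $G$ of degree $d-5$ passes through all points of $D$.

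The main step is this non-existence statement, which I would prove by a Bézout peeling argument. The set $D$ consists of the pairwise intersection points of $C_2,\dots,C_n$ other than the vertices $v_{23},\dots,v_{n-1,n}$; these are distinct nodes by nodality. Write $e_i=\deg C_i$, so $\sum_{i\ge2}e_i=d-2$.
\begin{enumerate}
\item The curve $C_2$ contains $e_2(d-2-e_2)-1$ points of $D$ (only $v_{23}$ is removed).
\item Comparing with Bézout, $e_2(d-2-e_2)-1 > e_2(d-5)$ is equivalent to $e_2(3-e_2)>1$, which holds for $e_2\in\{1,2\}$.
\item Hence $C_2$ is a component of $G$. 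For a singular conic $C_2$ I would apply the same count to each line separately, with the implicit vertex handled via its multiplicity.
\item Remove $C_2$ and repeat with $C_3$ against $G-C_2$, which has degree $d-5-e_2$. The count is $e_3(d-2-e_2-e_3)-1$ points against degree $d-5-e_2$, which is the same inequality. Each subsequent $C_i$ loses at most the one vertex $v_{i,i+1}$ beyond those already peeled off.
\end{enumerate}
Eventually the residual degree $d-5-\sum e_i$ becomes negative while points of $D$ remain to be covered, which is a contradiction. The delicate part is the bookkeeping at the end of the chain, namely $C_{n-1}$ and $C_n$, and the singular-conic case. I would double-check there that the residual count stays strictly above the Bézout bound, or that the residual degree is already negative. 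This gives $h^0(A,\cO_A(D-H))=0$ and completes the verification of the conditions of \Cref{prop:singDixon}.
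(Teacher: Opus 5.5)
Your proposal is correct, and for the key vanishing $h^0(A,\cO_A(D-H))=0$ it takes a genuinely different route from the paper.

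Both arguments reduce, via $D-H\sim(d-5)H-D$, to ruling out a curve $B$ of degree $d-5$ through the points of $D$. The paper then finishes in one line. The curve $B\cup L$ would have degree $d-4$ and vanish on $R(\cP')$, so it would be an adjoint of $\cP'$ containing the boundary line $L$, contradicting \Cref{prop:uniqueAdj}.

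You instead show directly, by B\'ezout along the chain $C_2,\dots,C_n$, that the $\frac12(d-3)(d-4)$ points of $D$ impose independent conditions on forms of degree $d-5$. In effect you re-derive the part of adjoint uniqueness that is actually needed.

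What each route buys:
\begin{enumerate}
\item The paper's route is shorter and explains the vanishing conceptually, as a consequence of uniqueness of $A'$ together with $L\not\subset A'$.
\item However, it tacitly uses that $B\cup L$ satisfies all conductor conditions of $\cP'$. That is immediate when $\cP'$ is nodal but needs an extra check otherwise, since $L$ may be tangent to some $C_j$ or pass through a point of $D$.
\item Your route uses only the nodality of $\cP$ and never touches the adjoint theory of $\cP'$. You also justify identifying $H^0(A,\cO_A((d-5)H-D))$ with degree $d-5$ forms through $D$, which the paper leaves implicit.
\end{enumerate}

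The end-of-chain bookkeeping you flag as delicate is clean. Let $m_k=d-5-\sum_{2\le i<k}e_i$ be the residual degree when you reach $C_k$. For $k\le n-1$, the curve $C_k$ carries exactly $e_km_k+1$ points of $D$ not yet covered. Moreover $m_{n-1}-e_{n-1}=e_n-3<0$, so the contradiction appears at step $n-1$ at the latest.

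The one correction concerns a singular conic $C_k=\ell\cup\ell'$ with $v_{k,k+1}\in\ell$. The implicit vertex is not in $D$, so it contributes nothing. The line $\ell$ carries only $m_k$ points of $D$ against degree $m_k$, so B\'ezout cannot be applied to $\ell$ first. Peel $\ell'$ first, since it carries $m_k+1$ points. The residual degree then drops to $m_k-1$, and $\ell$ is forced.
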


\begin{proof}
 By \Cref{lem:AdjOffBoundary} and \Cref{thm: adj_recursion} the set $D = R(\cP) \cap R(\cP')$ of points where $A'$ intersects $A$ are contained in the smooth locus of $A$. We thus interpret $D$ as a divisor, namely the contact divisor of $A'$ on $A$. Then $2D \sim (d-4)H$ and we claim that
 \begin{equation*}
     h^0(A, \cO_A(D-H)) = 0.
 \end{equation*}
 Note that from $D \sim (d-4)H - D$ we obtain
 \begin{equation*}
     h^0(A, \cO_A(D-H)) = h^0(A, \cO_A((d-5)H-D)),
 \end{equation*}
 such that it suffices to prove
 \begin{equation*}
     h^0(A, \cO_A((d-5)H - D)) = 0.
 \end{equation*}
 Assume the contrary, i.e. there exists a curve $B$ of degree $d-5$ vanishing in $D$ on $A$. Then the union $B \cup L$ is an adjoint of $\cP'$ in the sense of \Cref{def: adjoint}. But this contradicts \Cref{prop:uniqueAdj}, which states that the adjoint does not contain any of the boundary components of its defining polycon.
\end{proof}

We now turn to the second result of this section, which relates the fibers of $\alpha_{2,2,2}$ to symmetric LDRs of adjoints.

\subsection{Fibers of the Adjoint Map $\alpha_{2,2,2}$} \label{sec:adjMap_fiber}

We will explicitly construct fibers of $\alpha_{2,2,2}$ from determinantal representations. Unless stated otherwise, $A$ is a plane projective curve of degree three over $k \in \{\R,\C\}$. We begin with an observation that is an immediate consequence of the previous section.

\begin{lem} \label{lem:LDRFromPolycon}
 Let $\cP$ be a nodal polycon with boundary components $C_i$ and vertices $v_{ij}$. Let $A$ be its adjoint. Then there exists a symmetric LDR $M$ of $A$ such that
 \begin{equation*}
     M^{\textup{adj}} = \begin{pmatrix}
         \alpha_1 & c_3 & c_2 \\
         c_3 & \alpha_2 & c_1 \\
         c_2 & c_1 & \alpha_3
     \end{pmatrix}
 \end{equation*}
 with $C_i = \cV(c_i)$. Furthermore $A_i = \cV(\alpha_i)$ is the adjoint curve of the polycon obtained from $\cP$ by replacing $C_i$ with a line.
\end{lem}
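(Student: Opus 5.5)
We will run Dixon's Algorithm in the form of \Cref{prop:singDixon} on the cubic $A$, starting from the contact curve produced by \Cref{thm: adj_recursion}. Here $\cP$ is bounded by three conics, so $d=6$ and $\deg A = 3$. For each $i$ let $\cP_i$ be the polycon obtained by replacing $C_i$ with the line $L_i$ through its two vertices. By \Cref{thm: adj_recursion}, its adjoint $A_i=\cV(\alpha_i)$ is a conic with $A_i.A = 2D_i$, where $D_i = R(\cP)\cap R(\cP_i)$. This set consists of the $\tfrac12(d-3)(d-4)=3$ residual points not lying on $C_i$, namely the points of $R(\cP)\cap C_j\cap C_k$ for $\{i,j,k\}=\{1,2,3\}$. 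The preceding lemma shows that each $D_i$ satisfies the hypotheses of \Cref{prop:singDixon}. We therefore start Dixon's algorithm with $a_{11}=\alpha_1$ and $D=D_1$.

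\textbf{Filling in the first row.} We need two further conics in $\cL(2H-D_1)$ that are independent of $\alpha_1$. The natural candidates are $c_3$ and $c_2$. Each $c_j$ ($j\ne 1$) vanishes at the residual points of $C_2\cap C_3$, so it lies in $\cL(2H-D_1)$. On $A$, \Cref{lem:AdjOffBoundary} gives $C_3.A = R(\cP)\cap C_3 = (C_3\cap C_1) + (C_3\cap C_2)$ residual parts. Hence $C_3.A = D_1 + E_3$ with $E_3$ the residual points on $C_1\cap C_3$, which is exactly $D_2$. Likewise $C_2.A = D_1 + D_3$. In the notation of \Cref{rem:DixonVanishing}, the auxiliary divisors are therefore $D_2$ and $D_3$.

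\textbf{Remaining entries.} Dixon's step (2) asks for a conic $a_{22}$ with $\alpha_1 a_{22}-c_3^2\in(\alpha)$. By \Cref{rem:DixonVanishing}, $a_{22}$ must cut $2D_2$ on $A$. Since $h^0$ of conics through the three points of $D_2$ with prescribed tangency is one-dimensional (this is how uniqueness of contact curves was used before \Cref{thm: adj_recursion}), $a_{22}$ is a scalar multiple of $\alpha_2$. Similarly $a_{33}\propto\alpha_3$. The entry $a_{23}$ must cut $D_2+D_3$ and satisfy $\alpha_1a_{23}-c_3c_2\in(\alpha)$; the conic $c_1$ cuts exactly $D_2+D_3$ on $A$. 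Divisor equality alone only fixes $a_{23}$ modulo $\alpha$ up to scalar. In degree $2<3$ this ambiguity disappears, so $a_{23}=\lambda c_1$.

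\textbf{Main obstacle: the scalars.} We must show the scalars can be normalized to $1$ simultaneously, i.e.\ that $\alpha_i$ can be rescaled so that all three $2\times 2$ relations hold with exactly $c_1,c_2,c_3$. The relations $\alpha_1\alpha_2\equiv c_3^2$, $\alpha_1\alpha_3\equiv c_2^2$ and $\alpha_1 c_1\equiv \lambda c_2c_3$ modulo $\alpha$ fix $\alpha_2,\alpha_3$ once $\alpha_1$ is scaled. Rescaling $\alpha_1$ by $t$ changes $\lambda$ by $t^{-1}$, so we can take $\lambda=1$. The relation $\alpha_2\alpha_3\equiv c_1^2$ then follows by multiplying the first two and dividing by $\alpha_1^2$ in the function field of $A$ (valid since $\alpha_1\notin(\alpha)$ on each component of $A$, by \Cref{prop:uniqueAdj}-type arguments).

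More concretely, I would try to obtain the normalization directly from the additivity identity of \cite[Theorem 5.1]{Wachspress2016Adjoints}, $b\alpha l_i = b'\alpha_i c_i + b_0\prod_{j\ne i}c_j$, read modulo $\alpha$. This gives $\alpha_i c_i \equiv \mu_i c_jc_k$, which is precisely the off-diagonal minor relation with a controlled scalar; one must check $b'\neq0$ and $b_0\ne0$. Step (4) of Dixon then yields $M$ with $M^{\mathrm{adj}}$ of the stated shape, and \Cref{prop:singDixon} guarantees that $\det M$ is a nonzero multiple of $\alpha$.
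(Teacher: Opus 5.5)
Your proposal is correct and follows the paper's argument. The paper's proof is only the appeal to Dixon's Algorithm (\Cref{prop:singDixon}) and \Cref{rem:DixonVanishing}: seed with the contact conic $A_1$ and $D_1$, complete the first row by $c_3,c_2$, and identify the remaining entries via their divisors on $A$ and uniqueness of the contact conics, all of which you carry out in more detail than the paper. The one step you name but do not check is the linear independence of $\alpha_1,c_3,c_2$ in step (1); since $c_2,c_3$ span the pencil of conics through $C_2\cap C_3$, this is equivalent to $v_{23}\notin A_1$, which follows by evaluating $b\alpha l_1=b'\alpha_1c_1+b_0c_2c_3$ at $v_{23}$ (here $b\neq 0$ because $c_1\nmid c_2c_3$, $\alpha(v_{23})\neq 0$ by \Cref{lem:AdjOffBoundary}, and $l_1(v_{23})\neq 0$ as long as the three vertices are not collinear, which is implicit in $\cP_1$ being a polycon).
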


\begin{proof}
 This is an immediate consequence of Dixon's Algorithm (\Cref{prop:singDixon}) and \Cref{rem:DixonVanishing}
\end{proof}

Conversely, given a matrix $M$ with linear entries and $A = \cV(\det(M))$, one can immediately read off a polycon with adjoint $A$ (provided $M$ satisfies some mild conditions).

\begin{prop} \label{prop:PolyconFromLDR}
 Let $M$ be a matrix of linear forms with nonzero determinant and let
 \begin{equation*}
     M^{\textup{adj}} = \begin{pmatrix}
         \alpha_1 & c_3 & c_2 \\
         c_3 & \alpha_2 & c_1 \\
         c_2 & c_1 & \alpha_3
     \end{pmatrix}.
 \end{equation*}
 Write $A = \cV(\det(M)), A_i = \cV(\alpha_i), C_i = \cV(c_i)$, and assume the following two conditions:
 \begin{enumerate}
     \item $\rk(M(P)) \geq 1$ for all $P \in \pp^2$.
     \item $\rk(M(P)) = 1$ if and only if $P \in C_1 \cap C_2 \cap C_3$.
 \end{enumerate}
 Then for $\{i,j,k\} = \{1,2,3\}$ there exists a unique point $v_{ij} \in C_i\cap C_j$ (if necessary counted with multiplicities) such that $v_{ij} \not \in A_k$. In particular, $A$ is the adjoint curve of the polycon with boundary curves $C_i$ and vertices $v_{ij}$.
\end{prop}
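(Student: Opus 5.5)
The plan is to extract everything from the identity $(M^{\textup{adj}})^{\textup{adj}} = \det(M)\,M$ for $3\times 3$ matrices. Write $f = \det(M)$, and assume, as the symmetric form of $M^{\textup{adj}}$ suggests, that $M$ is symmetric. Then every $2\times2$ minor of $M^{\textup{adj}}$ equals $f$ times a linear form. Explicitly, modulo $(f)$ we get
\begin{equation*}
  \alpha_i\alpha_j \equiv c_k^2, \qquad c_ic_j \equiv \alpha_k c_k \qquad (\{i,j,k\}=\{1,2,3\}).
\end{equation*}
The second ingredient is pointwise. If $P\in A$ and $\rk M(P)=2$, then $M^{\textup{adj}}(P) = w w^\top$ for a kernel vector $w\neq 0$. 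If $P\notin A$, then $M^{\textup{adj}}(P)$ is invertible. By conditions (1) and (2), $\rk M(P)=1$ happens exactly at triple points $P\in C_1\cap C_2\cap C_3$, and there $M^{\textup{adj}}(P)=0$.

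\textbf{Step 1 (set-theoretic incidences).} Let $P\in C_i\cap C_j$ with $P\notin C_k$, and suppose $P\in A$. From $c_i(P)=w_jw_k=0$, $c_j(P)=w_iw_k=0$ and $c_k(P)=w_iw_j\neq 0$ we get $w_k=0$, hence $\alpha_k(P)=w_k^2=0$.

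Conversely, if $\alpha_k(P)=c_i(P)=c_j(P)=0$, then $M^{\textup{adj}}(P)$ has a zero row. Hence $f(P)^2=\det M^{\textup{adj}}(P)=0$, so $P\in A$. Thus, away from triple points,
\begin{equation*}
  C_i\cap C_j\cap A \;=\; C_i\cap C_j\cap A_k .
\end{equation*}
The same rank-one argument shows that every point of $A\cap C_i$ lies on $C_j$ or on $C_k$, since $w_jw_k=0$ forces $c_k=0$ or $c_j=0$. Therefore $A\cap C_i\subset (C_i\cap C_j)\cup(C_i\cap C_k)$.

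\textbf{Step 2 (counting).} By B\'ezout, $A.C_i$ has degree $6$, and it is supported on the two sets $C_i\cap C_j$ and $C_i\cap C_k$. If I show that each of these sets carries contact of total multiplicity at most $3$ with $A$, then each carries exactly $3$. The cleanest route is to prove that $A_k$, a conic, does not contain the whole length-$4$ scheme $C_i\cap C_j$. Combined with Step 1, and made scheme-theoretic by using $c_ic_j\equiv \alpha_kc_k \pmod f$ locally, where $c_k$ is a unit off triple points, this gives exactly one point of $C_i\cap C_j$, counted with multiplicity, that is not on $A_k$. This point is $v_{ij}$.

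To exclude $C_i\cap C_j\subset A_k$: if it held, Max Noether would give $\alpha_k = a c_i + b c_j$ with constants $a,b$. A symmetric row-and-column operation on $M^{\textup{adj}}$ would then kill the $(k,k)$-entry. This should contradict either condition (2) or $f\neq 0$, since the transformed adjugate would have rank $\le 1$ along a conic.

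I expect this step, together with the bookkeeping of multiplicities at possible triple points and tangencies, to be the main obstacle. At triple points all entries of $M^{\textup{adj}}$ vanish, so those points must be assigned to residual multiplicity-two conditions as in \Cref{rem: AdjUnique_pathologies}.

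\textbf{Step 3 (identification of the adjoint).} With the vertices $v_{ij}$ chosen as above, the residual arrangement of the polycon $(C_1,C_2,C_3;v_{12},v_{23},v_{31})$ consists of the remaining $3$ points in each $C_i\cap C_j$, and these lie on $A$ by Step 2. The cubic $A$ therefore satisfies the conductor conditions of \Cref{def: adjoint}. For nonreduced intersections, the tangency conditions follow from the local contact computed in Step 2. The uniqueness in \Cref{prop:uniqueAdj} then shows that $A$ is the adjoint.
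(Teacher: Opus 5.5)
\textbf{Step 1 is correct; Step 2 has a genuine gap at its key inequality.}

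Your Step 1 is correct. It is a pointwise version of the paper's tool: on $A$ you have $M^{\textup{adj}}=ww^\top$, which is the local form of the factorization $s_i\otimes s_j=(M^{\textup{adj}})_{ij}$ that the paper takes from Dixon's algorithm (\Cref{rem:DixonVanishing}) and the Kerner--Vinnikov sheaf.

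The gap is in Step 2, at the step that carries the whole proof. Your bound ``at most $3$'' rests on excluding $C_i\cap C_j\subseteq A_k$, and the proposed mechanism does not work. Suppose Max Noether gives $\alpha_k=ac_i+bc_j$. A symmetric congruence with $v=e_k+xe_j+ye_i$ turns the $(k,k)$-entry into
\begin{equation*}
 v^\top M^{\textup{adj}}v=(a+2x)c_i+(b+2y)c_j+x^2\alpha_j+2xy\,c_k+y^2\alpha_i .
\end{equation*}
Killing the linear part leaves $\tfrac14(a^2\alpha_j+2ab\,c_k+b^2\alpha_i)$, and nothing in the hypotheses makes this vanish. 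So no symmetric adjugate with zero $(k,k)$-entry is produced, and the contradiction with (2) or with $f\neq0$ is never derived. Without the bound, the split $6=3+3$ of $A.C_i$ is unfounded, and $v_{ij}$ is not pinned down.

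A smaller point: to identify $I_P(A,C_i)$ with the local length of $C_i\cap C_j\cap A$, you need $(f,c_i)=(f,c_j)$ locally. That follows from $c_jc_k\equiv\alpha_ic_i$, not from the relation $c_ic_j\equiv\alpha_kc_k$ that you cite.

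\textbf{The missing idea.} Use that $\alpha_k$ restricts to $A$ as a \emph{square}, whose square root is shared by $c_i$ and $c_j$. This is exactly what the paper's divisors encode:
\begin{equation*}
 A_k.A=2D_k,\qquad C_i.A=D_j+D_k,\qquad C_j.A=D_i+D_k .
\end{equation*}
So $C_i\cap C_j\cap A_k$ on $A$ is $D_k$, of degree $\tfrac12\deg(A_k.A)=3$. This follows from B\'ezout for $A.A_k$ directly, with no prior exclusion of $C_i\cap C_j\subseteq A_k$. In the smooth case, the rank hypotheses rule out common points of $D_i$ and $D_j$, since these would be triple points of rank $2$.

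You can run this inside your own framework:
\begin{itemize}
\item At $P\in A\cap A_k$ off triple points, your rank-one argument gives $P\in C_i\cap C_j$ with $\alpha_i(P)\alpha_j(P)\neq0$.
\item The minor relations $\alpha_i\alpha_k\equiv c_j^2$ and $c_jc_k\equiv\alpha_ic_i$ then give, locally,
\begin{equation*}
 (f,\alpha_k)=(f,c_j^2),\qquad (f,c_i)=(f,c_j).
\end{equation*}
\item Hence, absent triple points, $6=\deg(A.A_k)=2\operatorname{length}(C_i\cap C_j\cap A)$.
\item The remaining point of $C_i\cap C_j$ is $v_{ij}$. Your zero-row argument keeps it off $A_k$ once it is off $A$.
\end{itemize}

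\textbf{Alternative: repairing your Noether step directly.} For reduced $A$ this also works. Set $u=e_k-ae_j-be_i$. Then
\begin{equation*}
 \alpha_k-ac_i-bc_j=e_k^\top M^{\textup{adj}}u=w_k\,(u^\top w)\quad\text{on }A .
\end{equation*}
On each component of $A$, either $w_k\equiv0$ or $u^\top w\equiv0$.
\begin{itemize}
\item If $w_k\equiv0$ on a component, then $C_i$ and $C_j$ share that component.
\item Otherwise $u^\top M^{\textup{adj}}$ vanishes on that component. If this holds on every component, degree reasons give $u^\top M^{\textup{adj}}\equiv0$, contradicting $\det M^{\textup{adj}}=f^2\neq0$.
\end{itemize}
Non-reduced $A$, such as the paper's triple line, and rank-one triple points need extra care in either version.
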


\begin{proof}
 We deal with the case that $A$ is smooth separately to give an idea for the involved arguments. By \Cref{rem:DixonVanishing} there exist divisors $D_1, D_2, D_3$ on $A$ with
 \begin{equation*}
    A_i.A = 2D_i, \qquad C_k.A = D_i + D_j.
 \end{equation*}
 We then observe that the common intersection of three curves $A_i, C_j, C_k$ on $A$ is the (degree three) divisor $D_i$. In particular, there exists a unique point $v_{jk} \in C_j \cap C_k$, in which $\alpha_i$ does not vanish. Finally, smoothness of $A$ implies $\rk(M(P)) \geq 2$ for all $P \in \pp^2$ (\cite[p.~885]{Vinnikov1984CubicDetRep}). As a consequence, the points $v_{ij}$ are pairwise distinct: if not (say we have $v_{12} = v_{23}$), then $v_{12} \in C_1 \cap C_2 \cap C_3$, so $\rk(M(v_{12})) = 1$ by (2), an immediate contradiction. Thus from $M^{\textup{adj}}$ we may read off a well-defined polycon with adjoint $A$ as described.

 We now turn to potentially singular curves. The first additional hypothesis is there to ensure that there exists more than one point, in which two entries of $M^{\textup{adj}}$ might have a common zero. Indeed, if there were a point $P$ with $\rk(M(P)) = 0$, then every entry of $M$ vanished in $P$. As a consequence, every entry of $M^{\textup{adj}}$ would then be singular in $P$, so that the only point where any two of them have a common zero would be $P$.
 
 Now by \cite[Proposition 5.4]{KernerVinnikov2012singularDetRep}, $M$ corresponds to a certain coherent sheaf $\cF$ on $A$ satisfying $\cF \cong \cF^\vee(d-1)$ and
 \begin{equation*}
     h^0(A, \cF(-1)) = 0 = h^1(A, \cF(-1)).
 \end{equation*}
 Following the proof of \cite[Theorem 4.3]{KernerVinnikov2012singularDetRep} we have $h^0(A, \cF) = 3$, and there exists a choice of basis $(s_1,s_2,s_3)$ of $\Gamma(A, \cF)$ such that we may identify $s_i \otimes s_j = (M^{\textup{adj}})_{ij}$. In particular, since $\deg(\alpha_i) = 2$ the section $s_i \in \Gamma(A,\cF)$ has zeros in exactly three points on $A$. Again we conclude that there exists a unique point $v_{jk} \in C_j \cap C_k$, in which $\alpha_i$ does not vanish. As before, the second additional hypothesis ensures that the points $v_{ij}$ are unique and pairwise distinct, so that we obtain a well-defined polycon.
\end{proof}

\begin{rem}
 Note that if the first condition in \Cref{prop:PolyconFromLDR} is satisfied, then the second may be obtained after a change of basis: indeed, let $P \in A \cap C_1 \cap C_2 \cap C_3$ with $\rk(M(P)) = 2$. Then there exists $i$ with $P \not \in A_i$. Without loss of generality, let $i=1$. Then
 \begin{equation*}
     T M^{\textup{adj}} T^\top, \qquad T = \begin{pmatrix}
         1 & 0 & 0 \\
         \gamma & 1 & 0 \\
         0 & 0 & 1
     \end{pmatrix}
 \end{equation*}
 is the adjugate of a symmetric LDR $M'$ of $A$ and clearly $P \in A \cap C_1 \cap C_2$ while $P \not \in C_3$.
\end{rem}

As a corollary we obtain a second proof for the irreducibility of $Y_{2,2,2}$.

\begin{cor} \label{cor:Yirred}
 The variety $Y_{2,2,2}$ is irreducible.
\end{cor}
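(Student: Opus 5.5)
The approach is to realise a dense open subset of $Y_{2,2,2}$ as the image of an irreducible variety of symmetric matrices of linear forms. Irreducibility of $Y_{2,2,2}$ then follows because it is the Zariski closure of that image.

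Let $S \cong k^{36}$ be the affine space of symmetric $3\times 3$ matrices $M$ whose entries are linear forms in $x_0,x_1,x_2$. It is irreducible. Let $U \subset S$ be the subset where $\det M \neq 0$ and where the hypotheses (1) and (2) of \Cref{prop:PolyconFromLDR} hold. I would further shrink $U$ by requiring that the conics $c_1,c_2,c_3$ read off from $M^{\textup{adj}}$ are reduced and pairwise transversal, and that the points $v_{ij}$ of \Cref{prop:PolyconFromLDR} are distinct and satisfy $v_{ij}\notin C_k$. Each of these is an open condition, so $U$ is open in $S$. On $U$, \Cref{prop:PolyconFromLDR} gives a map
\begin{equation*}
    \Phi \colon U \to Y^\circ_{2,2,2}, \qquad M \mapsto (C_1,C_2,C_3,v_{12},v_{23},v_{31}).
\end{equation*}
I would check that $\Phi$ is a morphism, or at least rational. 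The $C_i$ come from the off-diagonal $2\times 2$ minors of $M$, so they are polynomial in $M$. Each $v_{jk}$ is the unique residual point of $C_j\cap C_k$ off $A_i$. To see that it varies algebraically, note that $v_{jk}$ is the unique point of $C_j\cap C_k$ not cut by $\alpha_i$, so it can be obtained rationally, for instance as a residual intersection.

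Next I would show that $U$ is nonempty and that $\Phi$ is dominant. By \Cref{lem:LDRFromPolycon}, every nodal polycon $\cP$ has a symmetric LDR $M$ of its adjoint with off-diagonal adjugate entries $c_3,c_2,c_1$. By the proof of \Cref{prop:PolyconFromLDR}, the vertices recovered from $M$ are exactly the $v_{ij}$ of $\cP$, namely the points of $C_j\cap C_k$ not on $A_i$, the adjoint of the polycon with $C_i$ replaced by a line. The nodal polycons in $Y^\circ_{2,2,2}$ form a nonempty open set; the example of \Cref{ex: counterexample} shows it is nonempty. So it remains to know that for a generic such $\cP$ the associated $M$ lies in $U$. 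Condition (1) holds because an $M$ with $\rk M(P)=0$ would force all $c_i$ to be singular at $P$, which fails generically. Condition (2) can be arranged after the change of basis in the remark following \Cref{prop:PolyconFromLDR}. Hence the image of $\Phi$ contains a dense subset of $Y_{2,2,2}$. Since $U$ is irreducible, so is the closure of $\Phi(U)$, which is $Y_{2,2,2}$.

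The main obstacle is the density step. It requires $Y^\circ_{2,2,2}$, or at least its nodal locus, to be dense in $Y_{2,2,2}$, and it requires every generic point to be hit. If $Y^\circ_{2,2,2}$ had several components, I would need that nodal polycons are dense in each of them. I would argue this by noting that nodality is an open condition defined by non-vanishing of discriminants. It is therefore dense in any component where it holds at one point, and a small perturbation of the conics keeping the vertices fixed makes any polycon nodal.
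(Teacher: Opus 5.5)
This is essentially the paper's argument: a rational map from the irreducible space of symmetric $3\times 3$ matrices of linear forms to $Y^\circ_{2,2,2}$ via \Cref{prop:PolyconFromLDR}, shown dominant by \Cref{lem:LDRFromPolycon}, with your density discussion supplying care the paper's two-line proof omits. One small correction: do not invoke the change-of-basis remark for condition (2), because replacing $M$ by $TMT^\top$ changes the off-diagonal entries of the adjugate and hence the polycon; it is also unnecessary, since for a nodal polycon $C_1\cap C_2\cap C_3=\emptyset$ and $\rk M(P)\le 1$ forces $M^{\textup{adj}}(P)=0$, so conditions (1) and (2) hold automatically (also, $S\cong k^{18}$, not $k^{36}$).
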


\begin{proof}
 We define the rational map
 \begin{equation*}
     \operatorname{Sym}_3(k[x_0,x_1,x_2]_1) \dashrightarrow Y_{2,2,2}^\circ
 \end{equation*}
 that assigns to a symmetric matrix $M$ of linear forms the polycon obtained through \Cref{prop:PolyconFromLDR}. By \Cref{lem:LDRFromPolycon} this rational map is dominant, so irreducibility for $Y_{2,2,2}^\circ$ follows from that of $\operatorname{Sym}_3(k[x_0,x_1,x_2])$.
\end{proof}

We finally ask, which plane cubic curves can be realized as the adjoint of a polycon. In other words, we want to establish the plane cubic curves that admit a symmetric LDR, the rank of which does not drop to $0$. Since there is a unique flavor to real polycons that is not present in the complex case, we too make a distinction between the real and the complex adjoint map.

\begin{prop} \label{prop: fiber222}
 Let $k \in \{\R,\C\}$. If $k = \C$, then every plane cubic curve admits a symmetric LDR, the rank of which does not drop to $0$. If $k = \R$, then the unique (up to isomorphism) plane cubic curve without such a representation is $\cV(x_0(x_0^2+x_1^2+x_2^2))$. Each connected component of any fiber of $\alpha_{2,2,2}$ is parametrized by an open subset of $\operatorname{GL}_3(k)/(k^\times)^3$, acting via symmetric change of basis modulo scaling.
\end{prop}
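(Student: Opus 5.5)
Three things have to be established: existence of symmetric LDRs whose rank never drops to $0$ over $\C$; the single real exception; and the description of the fibers. In all three I will use \Cref{prop:PolyconFromLDR} and \Cref{lem:LDRFromPolycon} to pass between polycons and such LDRs.

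\emph{Existence over $\C$.} I go through the projective classification of plane cubics. For a smooth cubic, \Cref{thm:contDiv=LDR} applies: a non-effective theta characteristic, i.e.\ a class $[D]$ with $2D\sim H$ and $\ell(D-H)=0$, always exists, and smoothness forces $\rk M(P)\geq 2$ everywhere. For singular cubics I write down explicit symmetric $3\times 3$ matrices of linear forms and check the rank at the singular points. The cases are: nodal and cuspidal cubics, conic plus secant line, conic plus tangent line, triangle, three concurrent lines, double line plus line, triple line, and a conic with a double line. For example, for a nodal cubic $x_0x_1x_2+x_1^3+x_2^3$ type normal forms I use a Hankel-type matrix. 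For the triangle I use
\begin{equation*}
\begin{pmatrix} 0 & x_0 & x_1\\ x_0 & 0 & x_2\\ x_1 & x_2 & 0\end{pmatrix},
\end{equation*}
whose determinant is $2x_0x_1x_2$; its rank never drops to $0$, since not all of $x_0,x_1,x_2$ vanish at any point. The rank can only drop to $0$ at a common zero of all nine entries. Such a point must be a point of multiplicity $3$ of $A$, so only three concurrent lines, a double line plus a line through a common point, and the triple line need care. In those cases the entries span at most two linear forms up to coordinates, so there the representation must use forms not all vanishing at that point. I check each normal form by hand; this is routine but long.

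\emph{The real exception.} For $\cV(x_0(x_0^2+x_1^2+x_2^2))$ I argue that every real symmetric LDR has rank $0$ at $(0:0:1)$... but that point is not on the curve. The right argument is different. Real symmetric LDRs of a real cubic correspond to real contact conics and real theta characteristics. For a line times an irreducible real conic with no real points, restrict $M$ to the line $x_0=0$ and to the conic. The positivity forced by $x_1^2+x_2^2$ being definite should make $M$ factor as $x_0 N\oplus (\ldots)$, or force rank $0$ at a real point. I expect this to be the main obstacle. My first attempt: $\det M|_{x_0=0}\equiv 0$, so along the line $M$ has a kernel. By symmetry and a real change of basis, the kernel vector can be made constant. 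One then gets a block form $\begin{pmatrix} \lambda x_0 & * \\ * & N\end{pmatrix}$ with $\det N$ a positive-definite binary quadric. Definite $2\times2$ symmetric linear pencils in two variables do not exist over $\R$, since $\det$ of $\begin{pmatrix} a&b\\b&c\end{pmatrix}$ in linear forms is indefinite unless degenerate. This forces the off-diagonal terms to be multiples of $x_0$, and then all entries vanish at the point $x_0=0$ where they meet. Uniqueness up to isomorphism then follows by running through the real forms of the complex classification: every other real cubic gets an explicit real LDR, again via the case analysis above.

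\emph{Fibers.} Fix $A$ and a polycon $\cP$ in the fiber. By \Cref{lem:LDRFromPolycon} it gives a symmetric LDR $M$, whose equivalence class corresponds to a contact-divisor class via \Cref{prop:singDixon}. The divisor class $[R(\cP)-2H]$ is discrete, in fact finite, so it is locally constant on a connected component of the fiber. Hence all polycons in a connected component give equivalent LDRs, $TMT^\top=\pm M'$. Conversely, every $T$ for which $TMT^\top$ satisfies conditions (1) and (2) of \Cref{prop:PolyconFromLDR} yields a polycon with adjoint $A$; this is an open condition in $T$. The polycon read off from $(TMT^\top)^{\textup{adj}}$ depends only on the off-diagonal entries up to scaling. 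Rescaling rows by a diagonal $T$ only scales each $c_i$, so the diagonal torus $(k^\times)^3$ acts trivially. Conversely, if two $T$ give the same conics, the diagonal contact conics $\alpha_i$ are determined by Max Noether as in \Cref{prop: Dixon}. So the two matrices differ by a diagonal matrix, which gives injectivity on $\operatorname{GL}_3(k)/(k^\times)^3$.
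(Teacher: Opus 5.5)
Your fiber argument is essentially the paper's. Both use the orbit of $M$ under $M\mapsto TMT^\top$, the diagonal torus acting by rescaling the $c_i$, and the fact that the diagonal entries of $M^{\textup{adj}}$ are determined by the off-diagonal ones. The paper gets the last point from vanishing $2\times 2$ minors at a point of $A$ where no entry vanishes. Your version, $\alpha_1c_1-c_2c_3\in(f)$ with $\deg\alpha_1<3$, is the same idea, and your discreteness argument makes explicit why a connected component stays in one equivalence class, which the paper leaves implicit.

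The genuine gap is in the existence parts. The paper does not prove these; it takes them from Piontkowski's Table~7. Your plan to verify them by hand cannot succeed, because the checks come out the other way.

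For the real ``exception'', both of your key steps are false. The real symmetric matrix
\[
M=\begin{pmatrix} x_0 & x_1 & x_2\\ x_1 & -x_0 & 0\\ x_2 & 0 & -x_0\end{pmatrix},\qquad \det M=x_0(x_0^2+x_1^2+x_2^2),
\]
has rank $\geq 2$ at every point: for $x_0\neq 0$ its lower right block is $-x_0I_2$, and on $x_0=0$ its rank is $2$. Along $x_0=0$ its kernel is spanned by $(0,x_2,-x_1)$, which no constant change of basis makes constant. Moreover, $\begin{pmatrix} x_1 & x_2\\ x_2 & -x_1\end{pmatrix}$ is a symmetric pencil whose determinant $-(x_1^2+x_2^2)$ is a definite binary quadric. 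So $\cV(x_0(x_0^2+x_1^2+x_2^2))$ does admit the required representation, and no argument can show otherwise.

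Your reduction to cones is correct and useful: rank $0$ at $P$ forces $P$ to be a triple point. However, three distinct concurrent lines admit no such representation, even over $\C$. Put the vertex at $p=(0:0:1)$ and write $M=N+x_2M_2$ with $N$ involving only $x_0,x_1$. Since $\det M$ is independent of $x_2$, we get $\det M_2=0$.
\begin{itemize}
\item If $\rk M_2=1$, normalize $M_2=e_3e_3^\top$. The $x_2$-coefficient of $\det M$ is the $(3,3)$-cofactor of $N$, which must vanish identically. After a change of basis of $\langle e_1,e_2\rangle$, the upper left block of $N$ is then $\ell\, e_1e_1^\top$, and $\det M=-\ell N_{23}^2$.
\item If $\rk M_2=2$, normalize $M_2=\operatorname{diag}(1,1,0)$. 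The $x_2^2$- and $x_2$-coefficients force $N_{33}=0$ and $N_{13}^2+N_{23}^2=0$. Hence $N_{13}=\pm iN_{23}$, and $N_{23}^2$ divides $\det M$.
\end{itemize}
In both cases $\det M$ has a repeated factor, so $M(p)=0$ for every symmetric LDR of three concurrent lines. Your ``routine'' case check therefore fails for this curve, and both of its real forms are additional real exceptions.

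The existence statements as written must be corrected before they can be proved, by your route or any other. Also, ``a conic with a double line'' is a quartic, not a cubic.
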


\begin{proof}
 The result on the existence of the symmetric LDRs is taken from \cite[Table 7]{Piontkowski2006singLDR}. For the statement on the fibers we note that the parametrization is given by symmetric changes of basis modulo scaling. It remains to prove that two inequivalent symmetric LDRs $M$ and $M'$ of a cubic curve will never just differ by the scaling of their entries. Otherwise the corresponding polycons would be the same. This can be verified by evaluating $M^{\textup{adj}}$ and $(M')^{\textup{adj}}$ in suitable points. Without loss of generality we assume after (symmetric) rescaling that the first row and column of $M^{\textup{adj}}$ and $(M')^{\textup{adj}}$ are equal. If now $a_{22}' = \gamma a_{22}$ for $\gamma \neq 1$, then consider a point $P \in A$, where none of the entries of $M^{\textup{adj}}$ vanish. In this point we have the vanishing $2\times2$ minors
 \begin{equation*}
     \gamma a_{11}(P)\gamma a_{22}(P) - a_{12}(P)^2 = 0 = a_{11}(P)a_{22}(P) - a_{12}(P)^2
 \end{equation*}
 and thus $\gamma = 1$. Analogous arguments work for the scaling factors of all other entries.
\end{proof}

\begin{rem}
 Also contained in \cite[Table 7]{Piontkowski2006singLDR} is the number of inequivalent symmetric LDRs of any cubic curve, and thus the number of components of the fibers of $\alpha_{2,2,2}$.
\end{rem}

\begin{ex}
 We consider the triple line $A = \mathcal{V}(x_0^3)$. There exists a unique (up to equivalence) symmetric LDR $M$ of $A$, the rank of which does not drop to $0$ (\cite[Table 7]{Piontkowski2006singLDR}):
 \begin{equation*}
     M = \begin{pmatrix}
         x_2 & x_1 & x_0 \\
         x_1 & -x_0 & 0 \\
         x_0 & 0 & 0
     \end{pmatrix}, \qquad M_\cP = M^{\textup{adj}} = \begin{pmatrix}
         0 & 0 & x_0^2 \\
         0 & -x_0^2 & x_0x_1 \\
         x_0^2& x_0x_1 & -x_1^2-x_0x_2
     \end{pmatrix}.
 \end{equation*}
 We apply the symmetric change of basis $T = \begin{pmatrix}
         1&0&1\\
         0&1&1\\
         0&0&1
     \end{pmatrix}$ to obtain
 \begin{equation*}
     T M_\cP T^\top = \begin{pmatrix}
         2x_0^2-x_1^2-x_0x_2 & x_0^2+x_0x_1-x_1^2-x_0x_2 & x_0^2-x_1^2-x_0x_2 \\
         x_0^2+x_0x_1-x_1^2-x_0x_2 & -x_0^2+2x_0x_1-x_1^2-x_0x_2 & x_0x_1-x_1^2-x_0x_2 \\
         x_0^2-x_1^2-x_0x_2 & x_0x_1-x_1^2-x_0x_2 & -x_1^2-x_0x_2
     \end{pmatrix},
 \end{equation*}
 which can then be observed to produce a polycon with adjoint $A$. There exists exactly one residual point $(0:0:1)$, and every pair of boundary conics intersects with multiplicity three in this point. The other intersection points are the vertices of the polycon.
\end{ex}

\begin{cor}
 The adjoint of a polycon bounded by three conics is generically smooth.
\end{cor}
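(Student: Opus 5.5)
The plan is to show that the locus of polycons in $Y_{2,2,2}$ whose adjoint is singular is a proper Zariski closed subset. Since $Y_{2,2,2}$ is irreducible (\Cref{cor:Yirred}), it suffices to show two things: this locus is closed, and it misses at least one polycon.

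For closedness, note that the adjoint map $\alpha_{2,2,2}$ is a rational map defined on a dense open subset of $Y_{2,2,2}$. The discriminant hypersurface $\Delta \subset \pp(k[x_0,x_1,x_2]_3)$ of singular cubics is closed, so its preimage under $\alpha_{2,2,2}$ is closed in the domain of definition. It remains to show that this preimage is not everything.

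I see two routes for that, and would use whichever is cleaner.

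\textbf{Route via \Cref{prop:PolyconFromLDR}.} Use the dominant rational map
\begin{equation*}
\operatorname{Sym}_3(k[x_0,x_1,x_2]_1) \dashrightarrow Y_{2,2,2}^\circ
\end{equation*}
from the proof of \Cref{cor:Yirred}. By construction, the composite with $\alpha_{2,2,2}$ sends $M$ to $\cV(\det M)$. It is classical that every smooth plane cubic has a symmetric LDR; this also follows from \Cref{prop: fiber222}. For a smooth cubic the rank of $M(P)$ never drops below $2$ (as used in the proof of \Cref{prop:PolyconFromLDR}), so condition (1) of that proposition holds. Condition (2) can be achieved after a symmetric change of basis, by the remark following \Cref{prop:PolyconFromLDR}. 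Hence a generic symmetric matrix of linear forms, for instance the one representing a Hesse cubic $x_0^3+x_1^3+x_2^3 - 3\lambda x_0x_1x_2$, yields a polycon in $Y_{2,2,2}^\circ$ whose adjoint is smooth. Because the map from $\operatorname{Sym}_3$ is dominant and the smooth-determinant locus is open and nonempty in $\operatorname{Sym}_3$, its image is dense. So the polycons with smooth adjoint form a nonempty open subset of the irreducible variety $Y_{2,2,2}$, which is the claim.

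\textbf{Route via an explicit example.} Alternatively, take the explicit polycon of \Cref{ex: counterexample} and check directly that its adjoint $\alpha$ is smooth, by verifying with \texttt{Macaulay2} that the ideal $(\alpha, \partial_x\alpha, \partial_y\alpha)$, homogenized, has no projective zeros. This would be a one-line computer check.

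\textbf{Main obstacle.} The delicate step is making sure the polycon produced from a smooth-determinant $M$ really lies in the open set $Y_{2,2,2}^\circ$, where the adjoint map is defined and agrees with the construction. That requires the boundary conics to be reduced, to meet pairwise transversally, and to have vertices avoiding the third conic. Since all of these are open conditions, it is enough to exhibit one $M$ satisfying them, or simply to fall back on the explicit example. For that reason I would present the explicit verification as the decisive nonemptiness witness, and the LDR argument as the conceptual explanation.
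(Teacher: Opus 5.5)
Your proposal is correct and is essentially the paper's argument: $Y_{2,2,2}$ is irreducible, and smooth cubics occur as adjoints via symmetric LDRs (\Cref{prop: fiber222} together with \Cref{prop:PolyconFromLDR}), so the preimage of the open set of smooth cubics is a nonempty, hence dense, open subset. The only differences are that you spell out why the preimage of the discriminant is closed and observe that one witness suffices, where the paper cites that every smooth cubic lies in the image; your explicit-example route is not needed, and unlike the LDR route it depends on a computer check you have not carried out.
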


\begin{proof}
By \Cref{prop: fiber222} the image of $\alpha_{2,2,2}$ contains all smooth cubic curves, an open subvariety of all cubic curves. The claim then follows from irreducibility of $Y_{2,2,2}$.
\end{proof}

\bibliographystyle{alpha}
\bibliography{bibliography_red}

\appendix

\section{Code Accompanying \Cref{thm: WachspressWrong}} \label{app:Code}

\begin{verbatim}
R = QQ[x,y,z];

V31 = ideal(x+3*z,y+6*z);
V12 = ideal(x-9*z,y+6*z);
V23 = ideal(x,y);

R31 = ideal(x-0*z,y+8*z);
R12 = ideal(x-6*z,y+8*z);
R23 = ideal(x-2*z,y+4*z);

P1 = ideal(x-0*z,y-4*z);
P2 = ideal(x-5*z,y-0*z);
P3 = ideal(x-3*z,y-0*z);

C1 = (mingens intersect(V31,V12,R31,R12,P1))_(0,0);
C2 = (mingens intersect(V12,V23,R12,R23,P2))_(0,0);
C3 = (mingens intersect(V23,V31,R23,R31,P3))_(0,0);

Res = intersect(ideal(C1,C2), ideal(C2,C3), ideal(C3,C1)) :
      intersect(V12,V23,V31);
adj = (mingens Res)_(0,0);

print minimalPrimes(ideal(C1,C2) : intersect(V12,R12));
print minimalPrimes(ideal(C2,C3) : intersect(V23,R23));
print minimalPrimes(ideal(C3,C1) : intersect(V31,R31));

L = (mingens(intersect(P1,ideal(5*x-2*z, 5*y-2*z))))_(0,0);

print minimalPrimes(ideal(L, C1));
print minimalPrimes(ideal(L, C2));
print minimalPrimes(ideal(L, C3));
\end{verbatim}

\section{A Note on the Genesis of \Cref{ex: counterexample}} \label{sec:SolutionGenesis}

This appendix intends to roughly motivate \Cref{ex: counterexample}. Our original intention was to \emph{prove} \Cref{conj: Wachspress} for polycons bounded by three conics as follows: if there is no immediate argument establishing the conjecture for a polycon $\cP$, then we deform the polycon while preserving its adjoint $A$ using \Cref{prop: fiber222}, until the situation is more tractable. Ideally, throughout this deformation we retain control over the residual arrangement and preserve regularity. Let $M$ be a symmetric LDR of $A$ and let
\begin{equation*}
    M_\cP = M^{\textup{adj}} = \begin{pmatrix}
         \alpha_1 & c_3 & c_2 \\
         c_3 & \alpha_2 & c_1 \\
         c_2 & c_1 & \alpha_3
    \end{pmatrix}
\end{equation*}
be the corresponding matrix encoding $\cP$ as in \Cref{lem:LDRFromPolycon}. One family of transformations that proved promising for our goals was
\begin{equation*}
    T_\gamma = \begin{pmatrix}
    1&\gamma&0\\
    0&1&0\\
    0&0&1
\end{pmatrix}
\end{equation*}
for $\gamma \in \R$. Its simple structure implies that large parts of the residual arrangement of $\cP$ are preserved during the deformation.

\begin{lem} \label{lem: resPreserv}
 Write $C_i = \cV(c_i), A_i = \cV(\alpha_i)$ and write $\cP_i$ for the polycon obtained from replacing $C_i$ with a line $L_i$. Then the following are preserved under the action of $T_\gamma$ on $\cP$:
 \begin{enumerate}
     \item $C_1, A_2$ and $A_3$.
     \item $C_1 \cap C_2$ and $C_1 \cap C_3 \cap R(\cP)$.
     \item $L_2$, i.e. the unique line through $v_{12}$ and $v_{23}$.
 \end{enumerate}
\end{lem}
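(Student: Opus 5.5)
The plan is a direct computation of how $T_\gamma$ acts on $M_\cP$, followed by reading the geometric data back off via \Cref{prop:PolyconFromLDR} and \Cref{rem:DixonVanishing}. Write $T_\gamma = I + \gamma E_{12}$. Then
\begin{equation*}
T_\gamma M_\cP T_\gamma^\top = M_\cP + \gamma(E_{12}M_\cP + M_\cP E_{21}) + \gamma^2 E_{12}M_\cP E_{21}.
\end{equation*}
Expanding entrywise gives the following.
\begin{align*}
\alpha_1' &= \alpha_1 + 2\gamma c_3 + \gamma^2\alpha_2, & c_3' &= c_3+\gamma\alpha_2, & c_2' &= c_2 + \gamma c_1, \\
\alpha_2' &= \alpha_2, & c_1' &= c_1, & \alpha_3' &= \alpha_3.
\end{align*}
This immediately gives (1).

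For (2), first note that $\cV(c_1, c_2+\gamma c_1) = \cV(c_1,c_2)$, so $C_1\cap C_2$ is unchanged. Next let $R \in C_1\cap C_3\cap R(\cP)$. Since $C_2$ is not involved, $R$ remains a residual point of $\cP_2$, which has boundary $C_1, L_2, C_3$. By \Cref{lem:LDRFromPolycon}, $A_2$ is the adjoint of $\cP_2$, so $\alpha_2(R)=0$, and therefore $c_3'(R) = c_3(R)+\gamma\alpha_2(R) = 0$. Hence $R \in C_1\cap C_3'$. It is still not the new vertex $v_{13}'$, because by \Cref{prop:PolyconFromLDR} that vertex is the unique point of $C_1\cap C_3'$ off $A_2' = A_2$. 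So $R$ stays residual.

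For (3), I first identify $L_2$ inside $M$ itself rather than inside $M^{\textup{adj}}$. At $v_{12}$ the matrix $M^{\textup{adj}}$ has rank one, say $ww^\top$ up to scaling. From $c_1=c_2=0$ and $\alpha_3(v_{12})\neq 0$ (because $v_{12}\notin A_3$) we get $w=e_3$. Since $MM^{\textup{adj}} = \det(M)\,I$ vanishes on $A$, it follows that $Me_3=0$ at $v_{12}$. Likewise at $v_{23}$ we have $w=e_1$, hence $Me_1 = 0$ there. The entry $m_{13}$ lies in both vectors, so the linear form $m_{13}$ vanishes at $v_{12}$ and at $v_{23}$. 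If these points are distinct, as in \Cref{prop:PolyconFromLDR}, and $m_{13}\not\equiv 0$, this gives $L_2 = \cV(m_{13})$.

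The new LDR is, up to scaling, $S^\top M S$ with $S = T_\gamma^{-1} = I-\gamma E_{12}$. Right multiplication by $S$ only alters column $2$, and left multiplication by $S^\top$ only alters row $2$, so the $(1,3)$ entry is unchanged. Hence $L_2$ is preserved.

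I expect the main obstacle to be this last step: justifying the rank-one description of $M^{\textup{adj}}$ at the vertices and the nondegeneracy $m_{13}\not\equiv 0$ when $A$ may be singular. I would handle both using the rank hypotheses of \Cref{prop:PolyconFromLDR} together with the fact that the vertices avoid $A_k$.
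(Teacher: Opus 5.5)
Your treatment of (1) and (2) is fine and matches the paper, which reads both off the transformed matrix. Your observation that $\alpha_2(R)=0$ for residual $R\in C_1\cap C_3$ is exactly what lies behind (2).

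The gap is in (3), and it is not just a matter of justification: the claim that $M^{\textup{adj}}$ has rank one at $v_{12}$, so that $Me_3=0$ there, is false. By \Cref{lem:AdjOffBoundary} the adjoint meets $C_\cP$ only in $R(\cP)$, so no vertex lies on $A=\cV(\det M)$ (e.g.\ $\alpha(9,-6)\neq 0$ in \Cref{ex: counterexample}). Hence $\det M(v_{12})\neq 0$, and both $M(v_{12})$ and $M^{\textup{adj}}(v_{12})$ are invertible. So $Me_3\neq 0$ at $v_{12}$, and the identity $MM^{\textup{adj}}=\det(M)I$ gives nothing there; the same happens at $v_{23}$. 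The rank hypotheses of \Cref{prop:PolyconFromLDR} cannot rescue this, since $\rk M(P)<3$ only for $P\in A$. On $C_1\cap C_2$, rank-one behaviour of $M^{\textup{adj}}$ occurs only at the residual points, and there $\alpha_3$ vanishes, so your $w=e_3$ never arises.

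The conclusion $L_2=\cV(m_{13})$ is nevertheless true, and your route can be repaired. From $(M^{\textup{adj}})^{\textup{adj}}=\det(M)\,M$, computing one cofactor gives the identity
\begin{equation*}
\det(M)\, m_{13} = c_1c_3-\alpha_2c_2 .
\end{equation*}
The right-hand side vanishes at $v_{12}\in C_1\cap C_2$ and at $v_{23}\in C_2\cap C_3$, while $\det M$ does not. So $m_{13}$ vanishes at both (distinct) vertices. Moreover $m_{13}\not\equiv 0$, since $c_1c_3=\alpha_2c_2$ would force $C_2$ to share a component with $C_1\cup C_3$. Equivalently: at each vertex $M^{\textup{adj}}$ is invertible and block diagonal, hence so is $M=\det M\,(M^{\textup{adj}})^{-1}$ there. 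Your observation that $S^\top MS$ has the same $(1,3)$-entry as $M$ then finishes the proof. Alternatively, the right-hand side above is visibly invariant under your formulas from (1), and $\det S=1$.

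This repaired argument differs from the paper's. The paper keeps $v_{12}$ fixed and notes that the second point of $C_3\cap L_2$ is residual for $\cP_2$. Hence it lies on $A_2$ and is a base point of the pencil $c_3+\gamma\alpha_2$, so two points of $L_2$ stay fixed. Your version instead exhibits $L_2$ as the zero set of an LDR entry that the change of basis does not touch. This avoids tracking residual points of the deformed polycon $\cP_2^\gamma$. But it relies on the vertices lying off $A$, the opposite of what your rank-one step assumed.
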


\begin{proof}
 By multiplying matrices we obtain
 \begin{equation*}
     M_\cP^\gamma = T_\gamma M_{\cP}(T_\gamma)^\top = \begin{pmatrix}
         \alpha_1 + 2\gamma c_3 + \gamma^2 \alpha_2 & c_3 + \gamma \alpha_2 & c_2 + \gamma c_1 \\
         c_3 + \gamma \alpha_2 & \alpha_2 & c_1 \\
         c_2 + \gamma c_1 & c_1 & \alpha_3
     \end{pmatrix} = \begin{pmatrix}
         \alpha_1^\gamma & c_3^\gamma & c_2^\gamma \\
         c_3^\gamma & \alpha_2 & c_1 \\
         c_2^\gamma & c_1 & \alpha_3
     \end{pmatrix}.
 \end{equation*}
 From this we immediately read off the first two statements. For the third we first note that $v_{12}$ is unchanged, since $v_{12} \in C_1 \cap C_2$. Now consider the pencil of conics $c_3^\gamma = c_3 + \gamma \alpha_2$. Since $A_2$ is the adjoint of $\cP_2$, one of the pencil's base points is the unique point in $(C_3 \cap L_2) \setminus \{v_{23}\}$. Since thus two points on $L_2$ are preserved by $T_\gamma$, so is $L_2$.
\end{proof}

We conclude with informally presenting some consequences of \Cref{lem: resPreserv}. We write $\cP^\gamma$ for the polycon corresponding to $M_\cP^\gamma$. There are three properties that would ideally hold for the family of polycons $(\cP^\gamma)_{\gamma \in I}$ for some suitable interval $I \subset \R$ with $0 \in I$:
\begin{enumerate}
    \item $\cP^\gamma$ is bounded by smooth conics.
    \item $\cP^\gamma$ is regular.
    \item the ``thickness'' of $(\cP^\gamma)_{\geq0}$ converges to $0$ as $\gamma \to \gamma_0$ for some $\gamma_0 \in \overline{I}\subset \R \cup \{\pm\infty\}$.
\end{enumerate}
There are strong indications (though a rigorous proof seems tedious) that the first and the third property may be satisfied simultaneously for the right choice of $I$ (after possibly relabeling). If the second were as well, then in light of \Cref{lem:AdjOffBoundary} the adjoint could not have an oval within $\cP_{\geq0}$. \Cref{ex: counterexample}, however, implies that this is in general not the case. Still, \Cref{lem: resPreserv} gives an indication why this is the case: since $C_1$ remains entirely unchanged, if $\cP^\gamma$ fails to be regular some $\gamma$, then this first occurs due to $C_2$ and $C_3$ being tangent in some point $P \in \partial \cP^\gamma$. While this observation is not a sufficient condition for a counterexample, it still initially gave rise to \Cref{ex: counterexample}. Note that in \Cref{fig:Wachspress_CounterExample} the two ``deflated'' sides of $\cP$ are almost tangent.
\end{document}